\newtheorem{theorem}{Theorem}[section]
\newtheorem{lemma}{Lemma}[section]
\newtheorem{definition}{Definition}[section]
\newtheorem{proposition}[theorem]{Proposition}
\newtheorem{corollary}[theorem]{Corollary}
\newcommand{\R }{ \ensuremath{ \mathbf{R} }}
\newcommand{\dive}{\mathrm{div}_E\, }
\newcommand{\grade}{\mathrm{grad}_E }
\newcommand{\grad}{\mathrm{grad}\,}
\newcommand{\hess}{\mathrm{Hess}}
\newcommand{\tr}{\mathrm{Tr}}
\theoremstyle{definition}
\newcommand{\eop }{ \hfill $\Box$ }
\newcommand{\mr }{ \mathbf{R}}
\newcommand{\mP }{\mathbf{P}}
\newcommand{\me}{\mathbf{E}}
\title{Foliated Stochastic Calculus: Harmonic measures}
\thanks{Research partially supported by FAPESP 11/50151-0 and 12/18780-0.}
\keywords{Foliation, diffusion process, stochastic calculus. \\
\indent 2010 {\it Mathematics Subject Classification.  Primary:
58J65, 53C12 ; Secondary: 60H30, 60J60}}
\author{Pedro J. Catuogno, Diego S. Ledesma and Paulo R. Ruffino}
\address{Departamento de
 Matem\'{a}tica, Universidade Estadual de Campinas,\\ 13.081-970 -
 Campinas - SP, Brazil.}
\email{pedrojc@ime.unicamp.br ; dledesma@ime.unicamp.br and 
ruffino@ime.unicamp.br}
\begin{document}

\begin{abstract}
\noindent In this article we present an intrinsic construction of foliated
Brownian motion (FoBM) via stochastic calculus adapted to a foliated Riemaniann
manifold $(M, \mathcal{F})$.
The stochastic
approach together with this proposed foliated vector calculus provide a natural
method to work with (L. Garnett's) harmonic measures in $M$. New results include,
beside
an explicit stochastic equation for the FoBM, a decomposition of
the Laplacian of $M$ in terms of the foliated and the basic Laplacians, a 
characterization
of totally invariant measures and a differential equation for the density of
harmonic measures.
\end{abstract}
\maketitle

\section{Introduction}

Harmonic measures in a foliated Riemannian manifold $M$ are invariant measures
for
foliated Brownian motion (FoBM), which is a diffusion
associated to the foliated Laplacian $\Delta_E$. A Borel probability measure
$\mu$ is harmonic if for any bounded measurable function $f$ which is leafwise
smooth,
\[
\int_M \Delta_Ef~d\mu=0.
\]
Harmonic measures have a central place in
the ergodic
theory of foliations and in the study of asymptotic properties of the leaves.
Originally introduced by L. Garnett \cite{Garnett}, this concept lays in the
intersection
of many different areas like stochastic analysis, differential geometry and
functional analysis. 
See e.g. Candel \cite{candel1}, Kaimanovich \cite{kaimainovich}, Yue \cite{yue},
Adams
\cite{adams}, Ghys \cite{ghys1}, Ledrappier \cite{ledrappier}, Alcalde Cuesta
and Rechman \cite{Alcalde-PJM}, \cite{Alcalde-DCDS}, Bonatti et al.
\cite{Bonatti} and others.

The usual construction of the heat semigroup associated
to the Brownian motion on the leaves use an 
analytic functional approach. In this article we apply stochastic calculus 
to construct explicitly an equation for the diffusion processes
associated to this
semigroup, Theorem 3.4. Among other advantages of this approach, 
the problem of continuity is automatically solved (see, \cite[p. 188]{candel1}).
Moreover, with this equation, many aspects of the dynamics of FoBM can be considered, in this article we focus mainly on  harmonic measures.

Initially, in the next section, we
introduce the foliated operators and study
properties of foliated Laplacians. In particular, Theorem 2.1 presents an
alternative interpretation of the foliated Laplacian in terms of the classical
Laplacian in $M$, the basic Laplacian (Tondeur \cite{Tondeur}) and a geometrical
vector field  $\kappa$.

In Section 3  we study the foliated Brownian motion. An stochastic equation for this process (Theorem 3.4) is 
obtained using the geometrical technique of projecting onto the leaves a
horizontal process in the
orthonormal frame bundle $O(E)$ of the leaves. Here we are inspired by
Eells-Elworthy-Malliavin argument for Riemannian manifolds (e.g.
\cite{Elworthy}, \cite{IW}, among others).

The main results of this paper are in Section 4 where we study the harmonic
measures. We start with totally invariant measures, which locally can be described as the product of holonomy
invariant measures and the volume measure on the leaves. It is well known in
the literature that their existence  depends on the geometry of the foliation
(see e.g. \cite{Garnett}, \cite{candel}). L. Garnett \cite{Garnett} has showed
that these measures are harmonic. In
Theorem 4.3 we prove a characterization of
totally invariant measures in terms of the foliated divergent. Corollary 4.4
again characterizes these measures in terms of the usual divergent in $M$ and
the geometrical vector field $\kappa$.


Equations for the density of invariant measures
has a natural interest in this context. 
See e.g. Bogashev et. al. \cite{Bogashev1} for the elliptic
case or the corresponding evolution
equation \cite{Bogashev2}. With this sort of applications in mind, we
present a differential  equation for the
density of harmonic measures in Theorem 4.6. In particular, this equation
provides a tool for studying the support of harmonic measures. Many examples are explored along the text.

\section{The foliated Laplacian}

In this section we  introduce the fundamental operators which are structural 
in the theory of foliated spaces. Our framework here is a Riemannian manifold
$(M,g)$ which is foliated by a family of submanifolds $\mathcal{F}$ which is
characterized by the integrable
distribution $E \subseteq TM$ given by the tangent bundle of the leaves. Let
$\pi:TM\rightarrow
E$ be the orthogonal projection on $E$, naturally the metric $g$
induces metrics $g_E$  and $g^\perp$ in $E$ and  $E^{\perp}$ respectively.



Let $\nabla$ be the Levi-Civita connection in $M$ with respect to $g$. Denote by
$\nabla^E$
the connection on $E$ induced by $\nabla$, i.e. 
\[
\nabla^E_XY=\pi\nabla_XY
\]
for all $X\in TM$ and $Y$ in $\Gamma ( E )$, the space of smooth sections of $E$
over $M$. The connection $\nabla^E$ is the Levi-Civita connection on the leaves
with respect to $g_E$.




\begin{definition}\label{z2}
Let $f: M \rightarrow \mr$ be a smooth function and $X,\: Y$ sections of $E$. We
define the foliated operators: 
\begin{itemize}
\item[a)] $\grade\:f=\pi (\grad f)$;

\vspace{3mm}

\item[b)] $\dive Y=\mathrm{Tr}_E\, g(\nabla_\cdot^E Y,\, \cdot\, )$, where
$\mathrm{Tr}_E\,$ is the trace on $E$;

\vspace{3mm}

\item[c)]$\mathrm{Hess}_E (f)(X,Y)=XY(f)-\nabla^E_XYf$;

\vspace{3mm}

\item[d)]$\Delta_Ef=\mathrm{div}_E(\mathrm{grad}_E f)=\mathrm{Tr}_E
\mathrm{Hess}_Ef$.
\end{itemize}
\end{definition}
Extending vector fields and smooth functions from a leaf to the manifold $M$,
one sees that the operators above
are the natural extension of
 the corresponding operator on the leaf.

Given $\{X_1,\ldots,X_p\}$ a local orthonormal basis of $E$, the following
classical formulae hold:
\begin{itemize}
\item[a)] $\grade\:f=\sum_{i=1}^p(X_if)\:X_i,$

\vspace{3mm}

\item[b)] $\dive Y=\sum_{i=1}^p \left(g(\nabla^E_{X_i}Y,X_i)\right),$

\vspace{3mm}

\item[c)] $\Delta_Ef=\sum_{i=1}^p\hess_E f(X_i,X_i)$.
\end{itemize}

Comparing the Hessian on $M$ with the foliated operator $\hess_E$, we have that
\begin{equation} \label{omega}
 \hess f(X,Y) = \hess_E f(X,Y) - W(X,Y)f
\end{equation}
where $W(X,Y)$ is the second fundamental form of the foliation. Also, by this
formula one finds that
\begin{eqnarray}\label{z3}
\Delta_Ef=\tr_E\, (\hess f)+K f,
\end{eqnarray}
where $K$ is the  mean curvature of the foliation, defined as
$K=\tr_E W$.

Given smooth functions  $f$ and $h$, the classical formulae below hold: 
\begin{equation}
 \label{z4}
\hess_E(fh) =
h\hess_Ef+f\hess_Eh+dh|_E\otimes
df|_E+df|_E\otimes dh|_E
\end{equation}
and
\begin{equation} \label{z4.1}
\Delta_E (fh) = f\Delta_E h+h\Delta_E f+2\: g(\grade {f},\grade h).
\end{equation}



 Following the definition presented by Rumler  \cite{Rumler} and \cite{Rumler1}
we introduce the characteristic form of $\mathcal{F}$.

\begin{definition}
Let $E$ be an orientable bundle. The characteristic form of
$\mathcal{F}$, denoted by $\chi_{E}$, is the differential $p$-form
on $M$ defined by
\[
\chi_{E}(Y_1,\ldots,Y_p)=\det[g(Y_i,E_j)]
\]
where $Y_1,\ldots,Y_p\in TM$ and $\{E_1,\ldots,E_p\}$ is a local
positively oriented orthonormal basis of sections of $E$.
\end{definition}

 The restriction of $\chi_E$ to tangent vectors to a leaf $L \in \mathcal{F}$ is
an induced volume form
in $L$. The characteristic form fits well in the approach proposed here in the
sense that we recover 
the classical formula: 

\begin{lemma}\label{z5}
Let $Y \in \Gamma(E)$. Then
\[
\dive(Y)\chi_E=L_Y \chi_E
\]
where $L_Y$ is the Lie derivative.
\end{lemma}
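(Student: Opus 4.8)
The plan is to test both $p$-forms against a leafwise frame, which is the natural thing to do here: as noted just above, $\chi_E$ restricts to the leaf volume form, so the asserted identity is really the classical relation between the Lie derivative of the volume form and the divergence, read along the leaves. Concretely, I would fix a point of $M$, take a local positively oriented $g_E$-orthonormal frame $\{E_1,\dots,E_p\}$ of $E$, and evaluate each side on $(E_1,\dots,E_p)$. On the right one has $\chi_E(E_1,\dots,E_p)=\det[g(E_i,E_j)]=1$, so $\dive(Y)\chi_E$ returns the scalar $\dive Y$; it then remains to show that $L_Y\chi_E$ returns the same value.

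For the left side I would use the intrinsic formula for the Lie derivative of a $p$-form,
\[
(L_Y\chi_E)(E_1,\dots,E_p)=Y\bigl(\chi_E(E_1,\dots,E_p)\bigr)-\sum_{i=1}^p\chi_E(E_1,\dots,[Y,E_i],\dots,E_p).
\]
The first term vanishes because $\chi_E(E_1,\dots,E_p)\equiv 1$. In the $i$-th summand the defining determinant is that of the identity matrix with its $i$-th column replaced by $\bigl(g([Y,E_i],E_j)\bigr)_j$, so it collapses to the diagonal entry $g([Y,E_i],E_i)$. Hence $(L_Y\chi_E)(E_1,\dots,E_p)=-\sum_{i=1}^p g([Y,E_i],E_i)$.

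The heart of the argument is to convert this bracket expression into the foliated divergence. Since $\nabla$ is torsion free, $[Y,E_i]=\nabla_YE_i-\nabla_{E_i}Y$, whence $g([Y,E_i],E_i)=g(\nabla_YE_i,E_i)-g(\nabla_{E_i}Y,E_i)$. Metric compatibility gives $g(\nabla_YE_i,E_i)=\tfrac12 Y\bigl(g(E_i,E_i)\bigr)=0$, and because $E_i\in E$ and $\pi$ is the orthogonal projection one has $g(\nabla_{E_i}Y,E_i)=g(\pi\nabla_{E_i}Y,E_i)=g(\nabla^E_{E_i}Y,E_i)$. Summing over $i$ yields $-\sum_i g([Y,E_i],E_i)=\sum_i g(\nabla^E_{E_i}Y,E_i)=\dive Y$, which matches the right-hand side and proves the identity as leafwise volume forms.

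I expect the only genuine obstacle to be conceptual rather than computational: one must be careful that the equality is the one obtained by pairing with a frame of $E$ (equivalently, by restricting to leaf-tangent directions), since $\chi_E$ is not a top-degree form on $M$ and $L_Y\chi_E$ need not be proportional to $\chi_E$ on all of $TM$. Once this is granted, the frame evaluation together with the torsion-free and metric-compatibility identities is routine.
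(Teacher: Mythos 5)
Your proof is correct and follows essentially the same route as the paper: evaluate both $p$-forms on a local positively oriented orthonormal frame of $E$ and reduce $(L_Y\chi_E)(E_1,\dots,E_p)$ to $\sum_i g(\nabla^E_{E_i}Y,E_i)=\dive Y$ via torsion-freeness and metric compatibility. The paper compresses your bracket computation into the single identity $g_E(L_YE_j,E_j)=-g_E(\nabla^E_{E_j}Y,E_j)$, but the argument is the same; your explicit remark that the equality is to be read leafwise (on arguments tangent to $E$) is a point the paper leaves implicit.
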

\begin{proof}
Let $\{ E_1,\ldots, E_p\}$ be a local positively oriented
orthonormal basis of sections of $E$. 
Symmetry of the connection implies that
\[ 
g_E(L_YE_j,E_j)=-g_E(\nabla^E_{E_j}Y,E_j), 
\]
hence
\begin{eqnarray*}
L_Y\chi_E(E_1,\ldots, E_p)&=
&-\sum_{j=1}^p\chi_E(E_1,\ldots,L_YE_j,\ldots,E_p)\\
& =& -\sum_{j=1}^p g_E (L_Y E_j, E_j) \chi_E(E_1,\ldots,E_p) \\
&=
&\sum_{j=1}^pg_E (\nabla^E_{E_j}Y,E_j)\chi_E(E_1,\ldots, E_p) \\ &
= & \dive(Y)\chi_E(E_1,\ldots, E_p).
\end{eqnarray*}
\end{proof}

Again, using the terminology of Rumler \cite{Rumler}, we introduce the following

\begin{definition}
 Let $E^{\perp}$ be a orientable bundle and $q=n-p$. The
characteristic transverse form of $\mathcal{F}$, denoted by
$\chi_{E^{\perp}}$, is the differential $q$-form on $M$ defined by
\[
\chi_{E^{\perp}}(Y_1,\ldots,Y_q)=\det[g(Y_i,E_j)]
\]
where  $Y_1,\ldots,Y_q\in TM$ and $\{E_{p+1},\ldots,E_{n}\}$ is a
local positively oriented orthonormal basis of sections of
$E^\perp$.
\end{definition}

 The following identity holds:

\begin{lemma}\label{z6}
\begin{equation}\label{z7}
d\chi_{E^\perp}=-\kappa^{\flat} \wedge \chi_{E^\perp},
\end{equation}
where $\kappa=\pi_E\left(\sum_{j=p+1}^n\nabla_{E_j}E^j\right)$.
\end{lemma}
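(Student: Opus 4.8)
The plan is to work in a local orthonormal frame adapted to the splitting $TM=E\oplus E^\perp$, say $\{E_1,\dots,E_p\}$ spanning $E$ and $\{E_{p+1},\dots,E_n\}$ spanning $E^\perp$, with dual coframe $\theta^1,\dots,\theta^n$ where $\theta^k=g(E_k,\cdot)$. By the very definition of the characteristic transverse form one has $\chi_{E^\perp}=\theta^{p+1}\wedge\cdots\wedge\theta^n$, so by the Leibniz rule $d\chi_{E^\perp}=\sum_{k=p+1}^n(-1)^{k-p-1}\,\theta^{p+1}\wedge\cdots\wedge d\theta^k\wedge\cdots\wedge\theta^n$, with $d\theta^k$ inserted in the slot of $\theta^k$. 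First I would expand each $d\theta^k$ in the frame via $d\theta^k(X,Y)=g(\nabla_XE_k,Y)-g(\nabla_YE_k,X)$, which follows from metric compatibility and the vanishing of the torsion of $\nabla$.

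The central step is a counting argument on which $2$-forms $\theta^a\wedge\theta^b$ occurring in $d\theta^k$ survive the wedge with the remaining factors $\theta^{p+1}\wedge\cdots\widehat{\theta^k}\cdots\wedge\theta^n$. A term survives only when the pair $\{a,b\}$ consists of one tangential index $a\le p$ together with the transverse index $k$: pairs with two transverse indices force a repeated transverse factor and so vanish, while pairs with two tangential indices $a,b\le p$ contribute $d\theta^k(E_a,E_b)=-g(E_k,[E_a,E_b])$, which is zero because $E$ is integrable and hence $[E_a,E_b]\in\Gamma(E)$ is orthogonal to $E_k$. This is precisely where the Frobenius integrability of the foliation enters, and it is the main point to get right; everything else is bookkeeping.

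For a surviving pair, using $g(\nabla_{E_a}E_k,E_k)=\tfrac12 E_a\,g(E_k,E_k)=0$, one gets $d\theta^k(E_a,E_k)=-g(\nabla_{E_k}E_k,E_a)$. Tracking the two permutation signs — the $(-1)^{k-p-1}$ from the Leibniz expansion and the $(-1)^{k-p-1}$ incurred in moving $\theta^a$ to the front of the wedge — they cancel, leaving $d\chi_{E^\perp}=-\sum_{a=1}^p\sum_{k=p+1}^n g(\nabla_{E_k}E_k,E_a)\,\theta^a\wedge\chi_{E^\perp}$. Finally I would identify the inner coefficient: since $g(\kappa,E_a)=\sum_{k=p+1}^n g(\nabla_{E_k}E_k,E_a)$ for $a\le p$ and $\kappa^\flat=\sum_{a=1}^p g(\kappa,E_a)\,\theta^a$ because $\kappa\in\Gamma(E)$, the double sum collapses to $-\kappa^\flat\wedge\chi_{E^\perp}$, which is exactly \eqref{z7}. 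The only subtlety beyond the integrability step is the sign accounting, which I expect to be routine once the adapted frame is fixed.
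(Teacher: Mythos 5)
Your argument is correct, and it reaches the identity by a genuinely different route from the paper. The paper's proof evaluates both sides as $(q+1)$-forms on tuples of the form $(Y,E_{p+1},\ldots,E_n)$: for $Y\in E^\perp$ both sides vanish trivially, and for $Y\in E$ it uses $i_Y\chi_{E^\perp}=0$ together with Cartan's magic formula to reduce $d\chi_{E^\perp}(Y,E_{p+1},\ldots,E_n)$ to the Lie-derivative computation $L_Y\chi_{E^\perp}(E_{p+1},\ldots,E_n)=-\sum_j g(L_YE_j,E_j)=-g\bigl(Y,\sum_j\nabla_{E_j}E_j\bigr)=-\kappa^\flat(Y)$. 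You instead expand $\chi_{E^\perp}=\theta^{p+1}\wedge\cdots\wedge\theta^n$ in the adapted coframe, apply the Leibniz rule, and sort the components of each $d\theta^k$ by a case analysis; the signs and the identification of the surviving coefficient with $-g(\nabla_{E_k}E_k,E_a)$ all check out, and the collapse to $-\kappa^\flat\wedge\chi_{E^\perp}$ is right since $\kappa\in\Gamma(E)$. Your version is longer and more bookkeeping-heavy, but it has one genuine advantage: it explicitly verifies the identity on $(q+1)$-tuples containing two or more tangential vectors, where the vanishing of $d\theta^k(E_a,E_b)=-g(E_k,[E_a,E_b])$ rests on the Frobenius integrability of $E$ --- a case the paper's proof does not address (it only tests tuples of the form $(Y,E_{p+1},\ldots,E_n)$). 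The paper's approach, in exchange, is shorter and makes the appearance of $\kappa$ as a mean-curvature-type trace more transparent. One very minor point: in your survival analysis you should also note that mixed pairs $\{a,l\}$ with $a\le p$ and a transverse index $l\neq k$ die for the same repeated-factor reason as the doubly transverse pairs; as written you only list the latter, though the omission is purely one of enumeration and does not affect the argument.
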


\begin{proof}
Let $\{E_{p+1},\ldots,E_{n}\}$ be a local positively oriented
orthonormal basis of sections of $E^\perp$. 

Initially, note that if  $Y\in E^\perp$, both sides of the Equation (\ref{z7}) 
vanish
 in the $(q+1)$-uple $(Y,E_{p+1},\ldots,E_{n} )$.
For $Y\in E$ we have that 

\[
g(L_Y E_j,E_j)=g(Y,\nabla_{E_j}E_j).
\]
Since $i_Y\chi_{E^\perp}=0$, by Cartan's formula:
\begin{eqnarray*}
d\chi_{E^\perp}(Y,E_{p+1},\ldots,E_{n})
&=&
L_Y\chi_{E^\perp}(E_{p+1},\ldots,E_{n})\\
&=&Y(\chi_{E^\perp}(E_{p+1},\ldots,E_{n}))\\
&&-\sum_{j=p+1}^n\chi_{E^\perp}(E_{p+1},\ldots,L_YE_j,\ldots,E_n)\\
&=&
 -g\left( Y,\sum_{j=p+1}^n\nabla_{E_j}E_j\right) \\
& = &- g(Y,\kappa).
\end{eqnarray*}
Thus
\[
d\chi_{E^\perp}(Y,E_{p+1},\ldots,E_{n})=-\sum_{j=p+1}^ng(Y,\pi_E\nabla_{E_j}
E_j)=-\kappa^\flat(Y),
\]
for all $Y\in TM$.\end{proof}

The vector field $\kappa$ defined in the lemma above can also be characterized
as the unique section
on $E$ such that
\begin{equation}
 \label{rmk4}
\kappa^{\flat}(X)=\dive(X)-\mathrm{div}(X),
\end{equation}
for all $X\in \Gamma E$. Yet, $\kappa$ is the trace on $E^\perp$ of the
bilinear form $b$, defined by
\[
b(V,W)=\pi \nabla_VW.
\]

The Laplacian $\Delta_M$ can be written in terms of the foliated
Laplacian $\Delta_E$, the section $\kappa$ and the basic Laplacian
$\Delta_b$. Let $\delta_b$ denote the formal adjoint of the exterior derivative
$d$
restricted to basic
forms and denote by $*$ the Hodge star operator (see e.g. Tondeur \cite[p.
134]{Tondeur}). The basic Laplacian $\Delta_bf=\delta_bdf$ is the second order
operator given by
\[
\Delta_bf=(-1)^{p(q-1)} *\left[ 
 K^{\flat}\wedge (*(df\wedge\chi_E) )\wedge\chi_E) - d (*(df\wedge\chi_E)
)\wedge\chi_E) \right].
\]

\begin{theorem}\label{z8}
If  $M$ is oriented, then 

\[
\Delta f=(\Delta_E f-\kappa f)+ (-1)^{pq+1}\Delta_bf.
\]
\end{theorem}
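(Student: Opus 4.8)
The plan is to begin from $\Delta f=\mathrm{div}(\grad f)$ and to split the gradient orthogonally as $\grad f=\grade f+\mathrm{grad}_{E^{\perp}}f$, where $\grade f=\pi(\grad f)\in\Gamma(E)$ and $\mathrm{grad}_{E^{\perp}}f=(\mathrm{id}-\pi)(\grad f)\in\Gamma(E^{\perp})$. Since the divergence is linear,
\[
\Delta f=\mathrm{div}(\grade f)+\mathrm{div}(\mathrm{grad}_{E^{\perp}}f),
\]
so the theorem reduces to identifying the first summand with $\Delta_Ef-\kappa f$ and the second with $(-1)^{pq+1}\Delta_bf$.

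For the leafwise summand I would apply the characterization \eqref{rmk4} of $\kappa$. As $\grade f\in\Gamma(E)$, that identity gives $\mathrm{div}(\grade f)=\dive(\grade f)-\kappa^{\flat}(\grade f)$. Now $\dive(\grade f)=\Delta_Ef$ by definition, while $\kappa\in\Gamma(E)$ yields $\kappa^{\flat}(\grade f)=g(\kappa,\pi\grad f)=g(\kappa,\grad f)=df(\kappa)=\kappa f$. Hence $\mathrm{div}(\grade f)=\Delta_Ef-\kappa f$, which produces the first two terms exactly.

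The transverse summand is the crux, and here I would pass to differential forms. On the oriented manifold the volume form factors as $\mathrm{vol}=\chi_E\wedge\chi_{E^{\perp}}$, and for any field $Z$ one has $\mathrm{div}(Z)\,\mathrm{vol}=d(i_Z\,\mathrm{vol})$. Taking $Z=\mathrm{grad}_{E^{\perp}}f$ and working in an adapted positively oriented orthonormal coframe, one checks (using $df\wedge\chi_E=df|_{E^{\perp}}\wedge\chi_E$ and the fact that $*$ interchanges the transverse block with its complement) that
\[
i_Z\,\mathrm{vol}=(-1)^{p(q-1)}\big(*(df\wedge\chi_E)\big)\wedge\chi_E .
\]
Differentiating this $(n-1)$-form reproduces precisely the term $d\big((*(df\wedge\chi_E))\wedge\chi_E\big)$ of the definition of $\Delta_b$; at this step Lemma~\ref{z6}, i.e.\ $d\chi_{E^{\perp}}=-\kappa^{\flat}\wedge\chi_{E^{\perp}}$, is what turns the plain exterior derivative into the basic codifferential and generates the mean-curvature term $K^{\flat}\wedge(\cdots)$. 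Applying $*$, and using $**=(-1)^{k(n-k)}$ together with $*\mathrm{vol}=1$, collapses everything to a function and yields $\mathrm{div}(\mathrm{grad}_{E^{\perp}}f)=(-1)^{pq+1}\Delta_bf$, the exponent $pq+1$ being the net orientation factor left over from the Hodge stars.

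The main obstacle is this last identification: matching the intrinsic characteristic-form definition of $\Delta_b$ with the geometric transverse divergence. Concretely it splits into (a) a disciplined bookkeeping of the signs produced by commuting $*$, the contraction $i_Z$, and the wedges with $\chi_E$ and $\chi_{E^{\perp}}$ — this is exactly the computation that pins down the exponent $pq+1$ — and (b) the correct use of Lemma~\ref{z6} to generate the mean-curvature correction so that the two-term expression for $\Delta_b$ appears. By contrast the gradient splitting and the leafwise identity are routine; the entire weight of the argument sits in the transverse Hodge-star calculation.
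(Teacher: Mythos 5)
Your overall strategy coincides with the paper's: split $\grad f$ orthogonally, identify $\mathrm{div}(\grade f)=\Delta_Ef-\kappa f$ via (\ref{rmk4}) exactly as you do, and reduce everything to showing $\mathrm{div}(\pi^\perp\grad f)=(-1)^{pq+1}\Delta_bf$ by a characteristic-form computation. The gap is in your plan for that last identification. You assert that Lemma \ref{z6} is ``what turns the plain exterior derivative into the basic codifferential and generates the mean-curvature term $K^{\flat}\wedge(\cdots)$.'' It does not: Lemma \ref{z6} involves $\kappa=\pi_E\left(\sum_j\nabla_{E_j}E_j\right)$, which is a section of $E$, and when you run Cartan's formula on $d\,i_{\pi^\perp\grad f}\chi_{E^{\perp}}$ the contribution it produces, $\kappa^{\flat}\wedge (i_{\grad f}\chi_{E^{\perp}})\wedge\chi_E$, vanishes identically because $\kappa^{\flat}$ is an $E$-covector and wedging with $\chi_E$ annihilates it. The term $K^{\flat}\wedge(*(df\wedge\chi_E))\wedge\chi_E$ in the definition of $\Delta_b$ involves the \emph{other} mean curvature, $K=\tr_E W\in\Gamma(E^{\perp})$, and in the paper it is matched by Rumler's formula $L_Z\chi_E|_E+K^{\flat}(Z)\chi_E|_E=0$ for $Z\in E^{\perp}$ (Equation (\ref{ar})), applied to the leftover Lie-derivative term $\chi_{E^{\perp}}\wedge L_{\pi^\perp\grad f}\,\chi_E$ that appears when one rewrites $(L_{\pi^\perp\grad f}\chi_{E^{\perp}})\wedge\chi_E$ as $L_{\pi^\perp\grad f}\mu_g-\chi_{E^{\perp}}\wedge L_{\pi^\perp\grad f}\chi_E$. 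Without Rumler's formula you have no mechanism to account for the two-term expression defining $\Delta_b$; conflating the roles of $\kappa$ and $K$ hides exactly the ingredient that makes the computation close.

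A secondary issue: your claimed identity $i_Z\mu_g=(-1)^{p(q-1)}\left(*(df\wedge\chi_E)\right)\wedge\chi_E$ disagrees by a factor $(-1)^{pq}$ with what follows from the paper's contraction identity (\ref{a1}) together with $\mu_g=\chi_{E^{\perp}}\wedge\chi_E$, which yields $i_Z\mu_g=(-1)^{p}*(df\wedge\chi_E)\wedge\chi_E$ for $Z=\pi^\perp\grad f$. Since you yourself identify the sign bookkeeping as the crux and do not carry it out, this is not by itself fatal, but it is a further sign that the decisive step of the argument has only been named, not performed.
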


\begin{proof} Orientability of $M$ implies that $ \chi^\perp=*\chi$.
Initially, consider a vector field $Y$ in  $M$. We claim that
\begin{eqnarray}\label{a1}
i_{Y}\chi^\perp=(-1)^{p}*(Y^\flat\wedge\chi).
\end{eqnarray}
In fact, let
$\{V_1,\ldots,V_q,E_1,\ldots,E_p\}$ be an adapted orthonormal basis of $TM$ and
consider its dual basis
$\{V_1^\flat,\ldots,V_q^\flat,E_1^\flat,\ldots,E_p^\flat\}$. We have
\[
\chi=E_1^\flat\wedge\ldots\wedge E_p^\flat \ \ \ \ \mbox { and } \ \ \ \
\chi^\perp=V_1^\flat\wedge\ldots\wedge V_q^\flat.
\]
It is enough to prove the equality for $Y=V_j$, for $j=1, \ldots , q$. 
But, for a fixed $j$, Equation (\ref{a1}) follows from
\[
i_{V_j}\chi^\perp=(-1)^{j+1}(V_1^\flat\wedge\ldots\wedge\widehat{V_j^{\flat}}
\wedge\ldots\wedge V_q^\flat)
\]
and
\[
*(V_j^{\flat}
\wedge\chi)=(-1)^{p+j-1}(V_1^\flat\wedge\ldots\wedge\widehat{V_j^{\flat}}
\wedge\ldots\wedge V_q^\flat).
\]

Secondly, let $f$ be a smooth function in $M$, by Cartan formula,
\begin{eqnarray}
di_{(\grad f)}\ \chi^\perp & = &di_{(\pi^\perp \grad f)}\ \chi^\perp \nonumber
\\
&=&L_{(\pi^\perp \grad f)}\ \chi^\perp-i_{(\pi^\perp \grad f)}\ d\chi^{\perp},
\label{a2}
\end{eqnarray}
where $\pi^\perp:TM\rightarrow E^\perp$ is the orthogonal projection. Equation
(\ref{z7}) implies that,
\begin{eqnarray}\label{a3}
 i_{\pi^\perp\grad f}d\chi^\perp&=&k^\flat \wedge i_{\pi^\perp\grad
f}\chi^\perp\nonumber \\
&=&k^\flat \wedge i_{\grad f}\chi^\perp.
\end{eqnarray}
 Rumler formula  (Tondeur \cite[pg. 66]{Tondeur}) says that
\begin{eqnarray}\label{ar}
L_Z\chi|_E+K^\flat (Z)\chi|_E=0,\hspace{1cm}\forall Z\in E^\perp.
\end{eqnarray}
Hence, combining the above equations, we have that
\begin{eqnarray*}
(-1)^{p}d*(df\wedge\chi)\wedge\chi
&=& (di_{\grad f}\chi^\perp)\wedge\chi \hspace{2.5cm}\textrm{by }(\ref{a1})\\
&=& (L_{\pi^\perp\grad f}\chi^\perp)\wedge\chi\\
&&-k^\flat\wedge (i_{\grad f}\chi^\perp)\wedge\chi\hspace{1.6cm}\textrm{by
}(\ref{a2})\textrm{ and }(\ref{a3})\\
&=& L_{\pi^\perp\grad f}\mu_g\\
&&-\chi^\perp\wedge (L_{\pi^\perp\grad f}\chi)\hspace{.7cm}
\\
&=& L_{\pi^\perp\grad
f}\mu_g+ K(f)\mu_g, 
\end{eqnarray*}
by Equation (\ref{ar}).  Again, by Equation (\ref{a1})  and the fact that
$K^\flat \wedge\chi^\perp=0$
one can show that
\begin{eqnarray*}
(-1)^{p}K^\flat \wedge *(df\wedge\chi)\wedge\chi&=&K^\flat \wedge i_{\grad
f}\chi^\perp\wedge\chi\\
&=&-i_{\grad f}(K^\flat \wedge \chi^\perp)\wedge\chi+(i_{\grad f}K^\flat
)~\chi^\perp\wedge\chi\\
&=& K(f)\mu_g.
\end{eqnarray*}
Now, replacing this formula in the definition of $\Delta_b$, we
obtain
\[
\Delta_bf=
-(-1)^{pq} \mathrm{div} (\pi^\perp\grad f).
\]
Thus, decomposing $\grad f$ into tangential and normal components:
\[
\Delta f=
(\Delta_E f-\kappa f)+ (-1)^{pq+1}\Delta_bf.
\]
\end{proof}



In order to construct foliated Brownian motion in the next section, we have to
study the 
horizontal lift of the foliated Laplacian.
We consider the principal bundle $O(E)$ of orthonormal frames in $E$, with
projection
$r:O(E)\rightarrow M$ and structural group
$O(p)$.

\bigskip

 The induced connection $\nabla^E$ gives a partition of the tangent bundle of
$O(E)$ into a vertical space $VO(E)$ and a horizontal space
$HO(E)$ such that $TO(E)=VO(E) \oplus HO(E)$ (see e.g. Kobayashi and Nomizu
\cite{kobayashi nomizu}). 

For each  $v$ in $\mr^p$ the standard
vector field $H_v$ in $O(E)$, is given by the
unique $H_v(u) \in HO(E)_u$ such that $r_*(H_v(u))=uv$. For an orthonormal frame
$\{e_1,\ldots,e_p\}$  of
$\mathbf{R}^p$, we define the horizontal foliated Laplacian in
$O(E)$ as
\[
\Delta_E^H=\sum_{i=1}^p(H_{e_i})^2.
\]
One checks that it is independent of the basis.

\bigskip

The following lemma shows that
$\Delta_E^H$ is the horizontal lift of $\Delta_E$:

\begin{lemma}\label{z29} For $f\in C^\infty(M)$, the following identity holds
\[
\Delta_E^H(f\circ r)=(\Delta_Ef)\circ r.
\]
\end{lemma}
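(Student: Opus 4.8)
The plan is to verify the identity pointwise. Fix a frame $u\in O(E)$ and set $x=r(u)$; since $u\colon\mr^p\to E_x$ is a linear isometry, the vectors $X_i:=ue_i$ form an orthonormal basis of $E_x$. Because $H_{e_i}$ is horizontal with $r_*H_{e_i}(u)=ue_i=X_i$, the first-order term is immediate: $H_{e_i}(f\circ r)(u)=df(r_*H_{e_i}(u))=X_if(x)$. To reach the second-order term I would pass to the integral curve $t\mapsto u(t)$ of $H_{e_i}$ with $u(0)=u$, and let $\gamma(t)=r(u(t))$ be its projection to $M$. Using the flow property of $H_{e_i}$ together with the naturality of $f\circ r$ under $r$, one checks that $H_{e_i}(f\circ r)(u(t))=\tfrac{d}{dt}f(\gamma(t))$, whence
\[
(H_{e_i})^2(f\circ r)(u)=\frac{d^2}{dt^2}\bigg|_{t=0}f(\gamma(t)).
\]

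The key geometric input, and the step I expect to be the main obstacle, is to identify $\gamma$ as a $\nabla^E$-geodesic carrying a parallel velocity field. Concretely, since the horizontal distribution $HO(E)$ is the one induced by $\nabla^E$, the frame $u(t)$ is the $\nabla^E$-parallel transport of $u$ along $\gamma$, so $\dot\gamma(t)=u(t)e_i$ is a $\nabla^E$-parallel section of $E$ along $\gamma$. In particular $\dot\gamma(t)\in E_{\gamma(t)}$ for all $t$ (hence $\gamma$ stays in a leaf) and $\nabla^E_{\dot\gamma}\dot\gamma=0$ with $\dot\gamma(0)=X_i$. This is the foliated analogue of the classical property of standard horizontal vector fields on the orthonormal frame bundle (as in the Eells--Elworthy--Malliavin construction for $TM$); the only point requiring care is that $O(E)$ is the frame bundle of the subbundle $E$ rather than of $TM$, so \emph{geodesic} and \emph{parallel} must be read relative to $\nabla^E$. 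Since $\dot\gamma$ is $E$-valued this causes no difficulty, and the leafwise geodesic equation follows directly from the identity $\nabla^E_XY=\pi\nabla_XY$ defining the induced connection.

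With this in hand I would compute the second derivative through the foliated Hessian. Extending $\dot\gamma$ to a local section $X$ of $E$ with $X\circ\gamma=\dot\gamma$ near $t=0$, the definition $\hess_E f(X,Y)=XYf-\nabla^E_XYf$ gives
\[
\frac{d^2}{dt^2}\bigg|_{t=0}f(\gamma(t))=(XXf)(x)=\hess_E f(X_i,X_i)(x)+(\nabla^E_XX)f(x),
\]
and the geodesic property $\nabla^E_{\dot\gamma}\dot\gamma=0$ annihilates the last term, leaving $\hess_E f(X_i,X_i)(x)$. Summing over $i=1,\dots,p$ and invoking formula (c) for $\Delta_E$ then yields
\[
\Delta_E^H(f\circ r)(u)=\sum_{i=1}^p\hess_E f(X_i,X_i)(x)=(\Delta_E f)(x)=(\Delta_E f)(r(u)),
\]
which is the claimed identity. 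I would close by noting that the basis-independence of $\Delta_E^H$, already observed before the lemma, ensures the argument does not depend on the chosen orthonormal basis $\{e_i\}$ of $\mr^p$.
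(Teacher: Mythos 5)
Your proof is correct and follows essentially the same route as the paper: both arguments rest on the observation that the integral curve of $H_{e_i}$ projects to a leafwise curve $\gamma_t=r(u_t)$ along which $u_te_j$ is $\nabla^E$-parallel. The only (harmless) difference is in the last algebraic step: the paper differentiates $g(\grade f(\gamma_t),u_te_j)$ using metric compatibility to obtain $\hess_E f(ue_i,ue_j)$ for all pairs $(i,j)$, whereas you compute only the diagonal terms via the leafwise geodesic equation $\nabla^E_{\dot\gamma}\dot\gamma=0$ together with the definition of $\hess_E$.
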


\begin{proof}  We first observe that
\begin{eqnarray*}
H_{e_j}(f\circ r)(u)&=&g\left(\grade f(r(u)),ue_j\right).
\end{eqnarray*}
For the second derivative, consider a horizontal curve $u_t$ in $O(E)$ such that
$u_0=u$ and $\dot
u_t=H_{e_i}(u_t)$. Then, for each $e_i$, the vector field $u_te_ i$ is
the parallel transport of $ue_i$ along $\gamma_t=r(u_t)$ with
respect to the connection $\nabla^E$. Hence,
\begin{eqnarray*}
H_{e_i}H_{e_j}(f\circ r)(u)&=&\left.\frac{d}{dt}\right|_{t=0}g\Big(\grade
f(r(u_t)),u_te_j\Big) \\
&=&
g\left(\nabla^E_{ue_i}\grade f(r(u)),ue_j\right) \\
&=&\textrm{Hess}_Ef(ue_i,ue_j)\circ r(u). \end{eqnarray*}

 So,
\begin{eqnarray*}
\Delta^H_E(f\circ
r)(u)&=&\left(\sum_{i=1}^p\textrm{Hess}_Ef(ue_i,ue_i)\right)\circ r(u).
\end{eqnarray*}
\end{proof}

\section{The foliated Brownian motion}

In this section we introduce the probabilistic aspects which are the key points
of our approach. We shall denote by
$(\Omega,\mathfrak{F},\{\mathfrak{F}_t\},\mP)$ a
filtered probability space
 satisfying the usual completeness conditions.
 

 A semimartingale $X$ in $M$  will be called
\emph{foliated} if each trajectory stays  in a single leaf.
Furthermore, a foliated semimartingale $X$ will be called a \emph{foliated
martingale} if for any
smooth function $f$,
\[
f(X)-f(X_0)-\frac{1}{2}\int_0 \textrm{Hess}_E f(dX,dX)
\]
is a local martingale. Foliated martingales may not be martingales in $M$;
precisely, this fact depends on the geometry of the foliation:

\begin{proposition}\label{z12}
 Foliated martingales are martingales in $M$ if and only if the foliation is
totally geodesic.
\end{proposition}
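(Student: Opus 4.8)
The plan is to compare the two martingale conditions directly via formula (\ref{omega}) and reduce the whole equivalence to the vanishing of the second fundamental form $W$ on $E$, which is exactly the totally geodesic condition. The starting observation is that, since $X$ is foliated, each trajectory lies in a single leaf, so the quadratic variation $dX\otimes dX$ is $E$-valued. Applying (\ref{omega}) along $X$ therefore yields
\[
\int_0\hess f(dX,dX)=\int_0\hess_E f(dX,dX)-\int_0 W(dX,dX)f,
\]
where the last term is a continuous process of finite variation (an integral of the adapted integrand $W(\cdot,\cdot)f$ against the quadratic covariation of the components of $X$). Writing $M_E=f(X)-f(X_0)-\tfrac12\int_0\hess_E f(dX,dX)$ and $M=f(X)-f(X_0)-\tfrac12\int_0\hess f(dX,dX)$, this gives $M-M_E=\tfrac12\int_0 W(dX,dX)f$. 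Hence, for a foliated martingale (so that $M_E$ is a local martingale), $X$ is a martingale in $M$ \emph{if and only if} the finite variation process $\tfrac12\int_0 W(dX,dX)f$ is a local martingale; being continuous, of finite variation, and null at the origin, this happens precisely when it vanishes identically.

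The easy implication is then immediate: if $\mathcal{F}$ is totally geodesic, then $W$ vanishes on $E\times E$, and since $dX\otimes dX$ is $E$-valued the correction term is identically zero for every $f$, so every foliated martingale is automatically a martingale in $M$.

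For the converse I would argue by contraposition. Suppose $\mathcal{F}$ is not totally geodesic; by symmetry of $W$ there exist $x\in M$ and $v\in E_x$ with $W(v,v)\neq 0$ in $E^\perp_x$, and one may choose $f\in C^\infty(M)$ with $df_x\big(W(v,v)\big)\neq 0$. Let $\gamma$ be the leaf geodesic (for $\nabla^E$) with $\gamma(0)=x$ and $\dot\gamma(0)=v$, let $B$ be a one-dimensional Brownian motion, and set $X_t=\gamma(B_t)$. Using $\nabla^E_{\dot\gamma}\dot\gamma=0$, the one-dimensional It\^o formula gives that $f(X_t)-f(X_0)-\tfrac12\int_0^t\hess_E f(\dot\gamma,\dot\gamma)\,ds=\int_0^t df(\dot\gamma(B_s))\,dB_s$ is a local martingale and that $\int_0\hess_E f(dX,dX)=\int_0\hess_E f(\dot\gamma,\dot\gamma)\,ds$, so $X$ is a foliated martingale whose quadratic variation points in the rank-one direction $\dot\gamma\otimes\dot\gamma$. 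For this $X$ the correction term is $\tfrac12\int_0^t W(\dot\gamma(B_s),\dot\gamma(B_s))f\,ds$, whose integrand equals $df_x\big(W(v,v)\big)\neq 0$ at $s=0$; it is therefore a nonconstant finite variation process, so $X$ is not a martingale in $M$. This contradiction forces $W\equiv 0$ on $E\times E$, i.e. $\mathcal{F}$ is totally geodesic.

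The delicate points I expect are purely formal: making rigorous that $dX\otimes dX$ is $E$-valued and that the identity relating $\int\hess f(dX,dX)$, $\int\hess_E f(dX,dX)$ and $\int W(dX,dX)f$ holds (this is exactly the bookkeeping handled by the Schwartz--Meyer second order tangent vector formalism), and checking carefully that $X_t=\gamma(B_t)$ is a foliated martingale with the claimed rank-one quadratic variation. Once these are in place, the elementary principle that a continuous finite variation local martingale is constant closes both implications.
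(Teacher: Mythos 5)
Your proof is correct and follows essentially the same route as the paper's: the easy direction rests on the identity $\hess f(X,Y)=\hess_E f(X,Y)-W(X,Y)f$ restricted to $E$, and the converse is tested on the foliated martingale $\gamma(B_t)$ along a leaf geodesic, which you phrase contrapositively via the second fundamental form while the paper concludes directly that $\gamma$ must be a geodesic of $M$. Your write-up is somewhat more explicit about why the nonvanishing finite-variation correction term obstructs the martingale property, but the underlying argument is the same.
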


\begin{proof} A foliation is totally geodesic when  $\hess_E f(X,Y)= \hess
f(X,Y)$ for all $f\in C^\infty(M)$, hence the result follows.

Conversely, consider  $\gamma$ a geodesic of a leaf. We have to proof that
$\gamma$ is a geodesic in $M$.
For a linear Brownian motion  $B$, the process  
$X=\gamma(B)$ is a foliated martingale, hence, by hypothesis, it is a martingale
in $M$. By standard calculation
\begin{eqnarray*}\int\frac{d^2}{dt^2}f(\gamma)(B)dt
&=&\int\textrm{Hess}_Mf(dX,dX)\\
&=&\int (\gamma_*\otimes\gamma_*)^*\textrm{Hess}_Mf (dB,dB)\\
&=&\int \tr  (\gamma_*\otimes\gamma_*)^*\textrm{Hess}_Mf (B) dt\\
&=&\int\hess_M f(\dot\gamma(B),\dot\gamma(B))dt.
\end{eqnarray*}
It follows that 
\begin{eqnarray*}
\frac{d^2}{dt^2}f(\gamma)(t)&=&\textrm{Hess}_Mf(\dot\gamma(t),
\dot\gamma(t)),
\end{eqnarray*}
i.e. $\gamma$ is a geodesic in $M$.
\end{proof}

\vspace{.5cm}

Next result says
that
 the foliated martingales also satisfy the nonconfluence
property:
\begin{proposition}\label{z13}
For each $x \in M$ there is an open neighborhood $U_x \subset M$
such that, if $X$ and $Y$ are foliated martingales in $U_x$ such
that $X_T=Y_T$ for a stopping time $T$ then $X_t=Y_t$ a.e. for $0\leq t \leq T$.
\end{proposition}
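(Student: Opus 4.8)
The plan is to prove nonconfluence by producing, on a small product neighborhood $U_x\times U_x$, a nonnegative ``separating'' function $\phi$ that vanishes exactly on the diagonal and whose leafwise Hessian is positive semidefinite, and then to run the foliated It\^o formula from the terminal time $T$ backwards. The mechanism is standard: if $\phi(X_t,Y_t)$ is a submartingale that is nonnegative and vanishes at $T$, it must vanish on all of $[0,T]$, which is exactly $X_t=Y_t$.

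First I would reduce the statement to a single leaf. Take $U_x$ to be a foliation (flow-box) chart, shrunk so small that each plaque is geodesically convex for the leaf metric $g_E$. Since a foliated semimartingale stays in one leaf, inside $U_x$ the transverse coordinates of both $X$ and $Y$ are constant, so each of $X$ and $Y$ lives in a single plaque. The hypothesis $X_T=Y_T$ then forces that common point to lie in both plaques, hence $X$ and $Y$ sit in the \emph{same} plaque; the whole argument therefore takes place in one geodesically convex leaf piece and involves only the connection $\nabla^E$.

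Next, on $U_x\times U_x$ I would use the product foliation with tangent bundle $E\oplus E$ and the product connection $\nabla^E\oplus\nabla^E$. The pair $Z=(X,Y)$ is a foliated martingale for this product structure: the defining condition is local, and for a product connection the Hessian $\hess_{E\oplus E}\phi$ splits into the blocks $\hess_E$ of each factor together with the \emph{Christoffel-free} mixed second derivatives, so no new correction terms arise and the mixed term only feels the joint quadratic covariation of $X$ and $Y$. As separating function I take $\phi=\tfrac12\, d_E^2$, the squared leaf-distance. On the diagonal its product Hessian acts as $(u,v)\mapsto |u-v|^2$, hence is positive semidefinite; shrinking $U_x$ further (equivalently, working in leaf normal coordinates at $x$, where $g_E$ is Euclidean to second order) keeps $\hess_{E\oplus E}\phi\ge 0$ on all of $U_x\times U_x$. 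Securing this convexity is the geometric heart of the proof and the main obstacle: in general the squared distance is convex only inside the injectivity radius and a geodesically convex ball, and it is precisely to guarantee it that $U_x$ must be taken small.

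Finally I would apply the foliated It\^o formula to $\phi(Z_t)$. By the defining property of foliated martingales, the process $\phi(X_t,Y_t)-\phi(X_0,Y_0)-\tfrac12\int_0 \hess_{E\oplus E}\phi(dZ,dZ)$ is a local martingale, and since $\hess_{E\oplus E}\phi\ge 0$ while the quadratic-variation measure of $Z$ is positive semidefinite, the bounded-variation part is nondecreasing. As $\phi$ is bounded on the relatively compact $U_x\times U_x$, localizing and stopping shows that $\phi(Z_t)$ is a genuine submartingale on $[0,T]$. Optional sampling then gives $\mathbf{E}\big[\phi(X_t,Y_t)\big]\le \mathbf{E}\big[\phi(X_T,Y_T)\big]=0$ for every $t\le T$, because $\phi(X_T,Y_T)=0$. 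Combined with $\phi\ge 0$ and $\phi^{-1}(0)=\{\text{diagonal}\}$, this forces $\phi(X_t,Y_t)=0$, that is $X_t=Y_t$ almost everywhere on $0\le t\le T$, as required.
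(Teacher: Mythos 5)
Your proposal is correct and follows essentially the same route as the paper's proof, which (following Emery's nonconfluence argument) also builds a nonnegative, leafwise-convex function on a small product neighborhood vanishing exactly on the diagonal and concludes from the fact that its value along $(X,Y)$ is a positive bounded submartingale null at time $T$. Your only departures are cosmetic: you instantiate the convex function as $\tfrac12 d_E^2$ and reduce to a single plaque, whereas the paper keeps an abstract convex $f$ on $U\subset L_p\times L_p$ extended by continuity to $M\times M$.
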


\begin{proof}
The proof follows similar ideas for nonconfluence of martingales in a manifold
as in Emery \cite[p.52-53]{emery}. For a fixed point $p \in M$ we consider a
convex function
$f$ defined on a neighborhood $U \subset L_p\times L_p$ where
$L_p$ is the leaf through of $p$. By continuity, we extend this
function to $\widetilde{f}:\widetilde{U}\subset M \times M
\rightarrow \R$ such that Hess$_{E\times E}\widetilde{f}
(A,A)\geq 0$ for all $A\in E\times E$ and
$\widetilde{f}|\{(x,x)\in \widetilde{U} \}=0$. Clearly, there
exists $U_x$ neighborhood of $x$ such that $U_x \times U_x \subset
\widetilde{U}$. Let $X$ and $Y$ be foliated martingales in $U_x$
such that $X_T=Y_T$ a.e.. Using that $\widetilde{f}(X,Y)$ is a
positive bounded submartingale null at time $T$, we conclude that
$X_t=Y_t$ a.e. for $0\leq t \leq T$.
\end{proof}

Let $X$ be a foliated semimartingale. We says that $X$ is a \textit{foliated
Brownian
motion} (FoBM) if for any smooth function $f$,
\[
f(X)-f(X_0)-\frac{1}{2}\int_0\Delta_E f(X)\:dt
\]
is a local martingale. Note that a process $X$ is a FoBM if and
only if it is
a foliated martingale and for any smooth function $f$,
\begin{equation}\label{z18}
[f(X),f(X)]=\int_0 |\textrm{grad}_Ef(X)|^2dt.
\end{equation}






The geometry of the foliation determines probabilistic properties of FoBM:
\begin{proposition}\label{z19}
 FoBM are martingales in $M$ if and only if the leaves are minimal
submanifolds, i.e. the foliation is 
harmonic.
\end{proposition}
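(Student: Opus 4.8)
The plan is to compare the two local-martingale conditions that characterize, respectively, a FoBM and a martingale in $M$, and to isolate their discrepancy as a finite-variation drift controlled by the mean curvature. Recall that $X$ is a martingale in $M$ precisely when, for every $f\in C^\infty(M)$, the process
\[
f(X)-f(X_0)-\frac12\int_0\hess f(dX,dX)
\]
is a local martingale. The first step is to evaluate $\hess f(dX,dX)$ along a FoBM. Since $X$ is foliated its quadratic variation is tangent to $E$, and the characterization (\ref{z18}) shows that, in a local orthonormal frame $\{X_1,\dots,X_p\}$ of $E$, the diffusion symbol of $X$ is the identity on $E$; polarizing (\ref{z18}) and substituting gives
\[
\int_0\hess f(dX,dX)=\int_0\sum_{i=1}^p\hess f(X_i,X_i)(X)\,dt=\int_0\tr_E(\hess f)(X)\,dt.
\]

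Next I would invoke the identity (\ref{z3}), namely $\tr_E(\hess f)=\Delta_E f-Kf$, where $K=\tr_E W$ is the mean curvature vector. Inserting this, the defining process for a martingale in $M$ splits as
\[
\Big(f(X)-f(X_0)-\tfrac12\int_0\Delta_E f(X)\,dt\Big)+\tfrac12\int_0 Kf(X)\,dt.
\]
By the very definition of FoBM the bracketed term is a local martingale, so $X$ is a martingale in $M$ if and only if the remaining term $\tfrac12\int_0 Kf(X)\,dt$ is a local martingale. This process is continuous and of finite variation, hence it is a local martingale exactly when it vanishes identically.

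It then remains to convert the condition ``$\int_0^t Kf(X)\,dt\equiv 0$ for every $f$ and every FoBM'' into ``$K\equiv 0$'', i.e.\ into minimality of the leaves. The implication from $K\equiv 0$ is immediate, since then the drift term disappears. For the converse I would choose, near an arbitrary point, a function $f$ whose differential realizes any prescribed covector, so that $Kf=df(K)$ is nonzero wherever $K\neq 0$; starting a FoBM at that point (such processes exist by the construction of Section~3) and using continuity forces $K=0$ along the support of the diffusion, and hence everywhere. I expect this final density/support argument to be the only delicate point, the remainder being the algebraic substitution above; the evaluation of $\hess f(dX,dX)$ through (\ref{z18}) is the technical heart of the proof, as it is precisely where the probabilistic structure of FoBM enters and turns the geometric identity (\ref{z3}) into a statement about drifts, exactly paralleling the role of (\ref{omega}) in Proposition~\ref{z12}.
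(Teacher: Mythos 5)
Your argument is correct and follows essentially the same route as the paper: both use the identity (\ref{z3}), $\Delta_E f=\tr_E(\hess f)+Kf$, to split the drift of $f(X)$ into the local martingale furnished by the FoBM property plus the finite-variation term $\tfrac12\int_0 Kf(X)\,dt$, which must vanish exactly when $X$ is a martingale in $M$, yielding $K=0$. You merely supply more detail than the paper at the two points it leaves implicit (the polarization of (\ref{z18}) to evaluate $\hess f(dX,dX)$, and the passage from $\int_0 Kf(X)\,dt\equiv 0$ to $K\equiv 0$ via existence of FoBM through every point).
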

\begin{proof}
Let $X$ be a FoBM. By Equation (\ref{z3}) and the
definition, we have that for all smooth function $f$,
\begin{equation}\label{z20}
f(X)-f(X_0)-\frac{1}{2}\int_0 \hess f~(dX,dX)
-\frac{1}{2}\int_0 Kf(X)~dt
\end{equation}
is a local martingale. Since $X$ is a martingale in $M$,
\[
\int_0 Kf(X)~dt=0,
\]
hence $K=0$.

Conversely, from Equation (\ref{z20}) and $K=0$ we have that
\[
f(X)-f(X_0)-\frac{1}{2}\int_0 \textrm{Hess}f~(dX,dX)
\]
is a local martingale. Thus $X$ is a martingale in $M$.
\end{proof}



 Garnett \cite{Garnett} introduced
 foliated heat kernels via foliated
semigroups
of operators which depends strongly on the geometry of the foliation, (see also
Candel \cite{candel1}).
We can recover the same semigroup considering the semigroup associated to a
FoBM, provided one can guarantee the existence of this stochastic process.

Focusing in this direction, we present an intrinsic  construction of FoBM. Our
argument corresponds to
an adaptation to  foliated spaces
of the techniques of Eells-Elworthy-Malliavin, classically used to
 construct  Brownian motions in a Riemaniann manifold
 (see e.g. \cite{Elworthy}, \cite{IW} and references therein) .

\begin{theorem}\label{z30}
Let $u_t$ be the solution of the Stratonovich equation
\begin{eqnarray*}
d u_t&=&\sum_{i=1}^pH_{e_i}(u_t)\: \circ dB^i,
\end{eqnarray*}
where $(B^1,\ldots,B^p)$ is a Brownian motion on $\R^p$ with $u_0$ as initial
condition in $O(E)$.
Then $r(u_t)$ is a FoBM in $M$ starting at $r(u_0)$.
\end{theorem}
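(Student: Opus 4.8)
The plan is to verify that the process $X_t = r(u_t)$ satisfies the defining property of a FoBM, namely that $f(X_t) - f(X_0) - \tfrac12\int_0 \Delta_E f(X)\,dt$ is a local martingale for every smooth $f$. The natural tool is the It\^o formula in Stratonovich form combined with Lemma~\ref{z29}, which identifies $\Delta_E^H$ as the horizontal lift of $\Delta_E$. Since $u_t$ solves a Stratonovich SDE driven by the horizontal standard vector fields $H_{e_i}$, I expect the computation to reduce the generator of $u_t$ on $O(E)$ to $\tfrac12\Delta_E^H$, which then projects down correctly.

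\medskip

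First I would apply the function $f \circ r \in C^\infty(O(E))$ to the solution $u_t$ and write out the Stratonovich chain rule:
\[
d\big(f\circ r(u_t)\big) = \sum_{i=1}^p H_{e_i}(f\circ r)(u_t)\,\circ dB^i.
\]
The key step is converting this Stratonovich integral into It\^o form. The correction term produced by this conversion involves the quadratic covariation of the driving Brownian motions; since $(B^1,\ldots,B^p)$ are independent standard Brownian motions, the cross terms vanish and one collects $\tfrac12\sum_{i=1}^p H_{e_i}H_{e_i}(f\circ r)\,dt = \tfrac12\,\Delta_E^H(f\circ r)\,dt$. At this point I would invoke Lemma~\ref{z29} to rewrite $\Delta_E^H(f\circ r) = (\Delta_E f)\circ r$, so that
\[
f(X_t) - f(X_0) = \sum_{i=1}^p \int_0 H_{e_i}(f\circ r)(u_s)\,dB^i_s + \tfrac12\int_0 (\Delta_E f)(X_s)\,ds,
\]
where the first sum is a sum of It\^o integrals against Brownian motion, hence a local martingale. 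This is exactly the FoBM condition.

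\medskip

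Two auxiliary points need attention. I must confirm that $X_t = r(u_t)$ is genuinely a foliated semimartingale, i.e.\ that each trajectory remains in a single leaf; this follows because the horizontal vector fields $H_{e_i}$ are defined via the connection $\nabla^E$ on the bundle $E$, so $r_*(H_{e_i}(u)) = u e_i \in E_{r(u)}$, forcing the drift of $X_t$ to stay tangent to the leaves, whence $X_t$ never leaves the leaf through $r(u_0)$. I should also note that solutions of the SDE exist (up to an explosion time) by the standard theory for Stratonovich equations with smooth coefficients, so the statement holds on the maximal interval of definition.

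\medskip

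The main obstacle is the Stratonovich-to-It\^o conversion and making sure the second-order term assembles into precisely $\tfrac12\Delta_E^H$ rather than some operator containing spurious first-order or curvature corrections. The clean identity in Lemma~\ref{z29} was computed using that $u_t e_i$ is the $\nabla^E$-parallel transport along horizontal curves, which is exactly the geometric content that makes the It\^o correction match the foliated Laplacian; keeping the bookkeeping of that parallel-transport structure aligned with the Stratonovich correction is the delicate part, though the remaining manipulations are routine.
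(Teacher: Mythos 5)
Your proposal is correct and follows essentially the same route as the paper: apply the Stratonovich chain rule to $f\circ r$ along $u_t$, convert to It\^o form to produce the correction term $\tfrac12\sum_i H_{e_i}^2(f\circ r)\,dt$, and invoke Lemma~\ref{z29} to identify this with $\tfrac12(\Delta_E f)\circ r$. Your additional check that $r(u_t)$ stays in a single leaf (since $r_*H_{e_i}(u)=ue_i\in E_{r(u)}$) is a point the paper leaves implicit, and is a welcome completion of the argument rather than a departure from it.
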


\begin{proof}  For any smooth function $f$ in $M$, applying Lemma \ref{z29}, we
have that
\begin{eqnarray*}
f(r(u))-f(r(u_0))
&=&\sum_{i=1}^p\int H_{e_i}(f\circ r)(u)\: dB^i+\\
&&\frac{1}{2}\int \sum_{i=1}^pH_{e_i}^2(f\circ r)(u)dt \\
&=&\sum_{i=1}^p\int H_{e_i}(f\circ r)(u)~
dB^i_t\\
&&+\frac{1}{2}\int \Delta_Ef (r(u))dt.
\end{eqnarray*}

\end{proof}

\noindent \textbf{Example 1:} Let $N$ and $L$ be two Riemannian manifolds.
 Consider the product $M=N\times L$ with the canonical foliation given
by 
$E= TL \subset TM= TN \oplus TL$. The foliated Laplacian $\Delta_E = \Delta_L$,
hence if $W$ is a Brownian
motion in $L$ then $B=(x_0,W)$ is a FoBM. 

\eop

\noindent \textbf{Example 2:} (Kronecker foliation) Consider the totally
geodesic foliation
of the plane $\R^2$ along 
lines parallel to the vector $(a,1)$. The process 
    \[
    B=\frac{(a,1)}{\sqrt{a^2+1}} ~W,
    \]
where $W$ is a linear Brownian motion is a foliated Brownian motion. Now,
denoting by $\mathbf{T}^2$
 the 2-torus $S^1 \times S^1\subset\mr^4$,
 let $\phi:\R^2 \rightarrow\mathbf{T}^2$ given by
\[
\phi(x,y)=(\cos(x),\sin(x),\cos(y),\sin(y)).
\]
The induced foliation by $\phi$ in $\mathbf{T}^2$ is called the Kronecker
foliation, Candel and Conlon \cite{candel}.
 We claim that $\phi(B)$
is a FoBM. In fact,
$E=\{\lambda (a\partial_x+\partial_y),\:\lambda\in\mr\}$ and
\[
\Delta_E=\frac{1}{a^2+1}(a^2\partial_x^2+2a\partial_{xy}^2+\partial_y^2),
\]
where $\partial_x=\phi_*(e_1)$ and $\partial_y=\phi_*(e_2)$.

For all smooth function $f$ in $\mathbf{T}^2$, 
\begin{eqnarray*}
    f(\phi(B_t))&=&f(\phi(x_0))+\int_0^ta(\partial_xf)(\phi(B_s))
+(\partial_yf)(\phi(B_s))dW_s\\
    &&+ \frac{1}{2}\int_0^t(\Delta_Ef)(\phi(B_s))ds.
    \end{eqnarray*}
 Hence  $\phi(B)$
is a foliated Brownian motion in the Kronecker foliation of the torus.

\eop

Next proposition shows that 1-dimensional foliations generated by
unitary vector fields have an easy construction of FoBM:

\begin{proposition}     \label{1-dim FoBM}   
Let $M$ be a foliated Riemannian manifold where the distribution $E$
is generated  by a smooth unitary vector field $Y$. If $\phi_t$ is the flow of 
 diffeomorphisms associated to $Y$ and $B$ is a linear Brownian motion, then 
 $\phi_{B}(x_0)$ is a FoBM starting at $x_0\in M$.
\end{proposition}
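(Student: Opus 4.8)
The plan is to show that the process $X_t := \phi_{B_t}(x_0)$ satisfies the defining martingale characterization of a FoBM, namely that for every smooth $f$ on $M$ the process $f(X_t)-f(X_0)-\tfrac12\int_0^t \Delta_E f(X_s)\,ds$ is a local martingale. Since $E$ is one-dimensional and spanned by the unit field $Y$, the foliated Laplacian simplifies dramatically: with the single orthonormal basis vector $X_1=Y$, Definition \ref{z2}(c)--(d) give $\Delta_E f = \hess_E f(Y,Y) = YY(f) - (\nabla^E_Y Y)f$. Because $Y$ is unitary and $\nabla^E_Y Y = \pi\nabla_Y Y$ is tangent to $E$, symmetry of the Levi-Civita connection forces $g(\nabla^E_Y Y, Y) = \tfrac12 Y\,g(Y,Y) = 0$; as $E$ is spanned by $Y$ this yields $\nabla^E_Y Y = 0$. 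Hence $\Delta_E f = YY(f) = Y(Yf)$ along the leaf, which is exactly the second Lie derivative of $f$ in the direction $Y$.

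The second ingredient is the flow identity. For the flow $\phi_t$ of $Y$ one has, by the very definition of the flow, $\frac{d}{dt} g(\phi_t(x)) = (Yg)(\phi_t(x))$ for any smooth $g$; iterating gives $\frac{d^2}{dt^2}\big(f\circ\phi_t\big)(x) = \big(Y(Yf)\big)(\phi_t(x)) = (\Delta_E f)(\phi_t(x))$. With this deterministic relation in hand, I would set $g = f\circ\phi_{\cdot}(x_0)$ regarded as a function of the time parameter and apply the Stratonovich–Itô change of variables to the composition $f(X_t) = g(B_t)$. Writing the calculation in Itô form,
\[
f(X_t) - f(X_0) = \int_0^t g'(B_s)\,dB_s + \tfrac12\int_0^t g''(B_s)\,ds
= \int_0^t (Yf)(X_s)\,dB_s + \tfrac12\int_0^t (\Delta_E f)(X_s)\,ds,
\]
using $g'(B_s)=(Yf)(X_s)$ and $g''(B_s)=(\Delta_E f)(X_s)$ by the flow identity and the Laplacian computation above. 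The stochastic integral term is a local martingale, so the required expression is a local martingale and $X$ is a FoBM.

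It remains only to note that $X$ is genuinely a foliated semimartingale, i.e. each trajectory stays in a single leaf: this is immediate because $\phi_t(x_0)$ is an integral curve of $Y$ and integral curves of a field spanning $E$ are precisely the leaves of $\mathcal{F}$. The main obstacle is essentially bookkeeping rather than anything deep: I must be careful that the ordinary chain-rule identity $\frac{d^2}{dt^2}(f\circ\phi_t) = \Delta_E f\circ\phi_t$ is the correct second-order term, and that it matches the Itô correction $\tfrac12 g''$ produced by the quadratic variation of the \emph{linear} Brownian motion $B$ (whose quadratic variation is $dt$ with coefficient one, so no extra geometric factor intrudes). The cleanest route is to treat $f\circ\phi_t(x_0)$ as a smooth function of the real variable $t$ and invoke the one-dimensional Itô formula directly, which sidesteps any frame-bundle machinery and makes the identification of the drift with $\tfrac12\Delta_E f$ transparent.
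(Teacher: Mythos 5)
Your proof is correct and takes essentially the same route as the paper: both rest on $\nabla^E_Y Y=0$ (hence $\Delta_E=Y^2$ and $\phi_t(x_0)$ is a leafwise geodesic) followed by the one-dimensional It\^o formula applied to $t\mapsto f(\phi_t(x_0))$, which you simply write out more explicitly than the paper's terse ``geodesic, hence martingale'' phrasing. The only cosmetic slip is that the identity $g(\nabla^E_YY,Y)=\tfrac12 Y\,g(Y,Y)=0$ follows from metric compatibility of the Levi--Civita connection, not from its symmetry.
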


\begin{proof}
We have that $ \Delta_E = Y^2 $ and $\phi_t (x_0)$ is a geodesic in the leaf.
Hence $\phi_{B}(x_0)$ is a martingale, so that

\[
f(\phi_{B}(x_0))-f(\phi_{0}(x_0))-\frac{1}{2}\int
\Delta_Ef(\phi_B(x_0))~dt
\]
is a local martingale for any smooth function $f$. So, $\phi_B(x_0)$ is a FoBM
starting at
$x_0$.
\end{proof}

\section{The Harmonic Measures}

In this section we focus
on the theory of harmonic measures, according to  Garnett \cite{Garnett}, Candel
\cite{candel1} and others. Associated to the FoBm we have $\{T_t\}$ the Markov
semigroup, determinated by
\[
 T_tf(x)=\me[f(B_t^x)],
\]
 acting in the space $\mathcal{B}_b^L$ of bounded measurable function $f$ which
are leafwise smooth. 

As a corollary of the It\^o formula we obtain that the infinitesimal generator
of $\{T_t\}$ is $1/2 \Delta_E$. A probability measure $\mu$ in $M$ is called
invariant under $\{T_t\}$ if
\[
 \int_MT_tf~d\mu=\int_Mf~d\mu
\]
for all $f\in \mathcal{B}_b^L$ and $t\geq0$.


We recall that a probability measure $\mu$ on a foliated manifold $M$ is 
called 
\textit{harmonic} if for any $f\in\mathcal{B}_b^L$
\[
\int_M\Delta_Ef ~ d \mu=0.
\]

\begin{lemma} A probability measure $\mu$ on $M$ is harmonic if and only if it
is
an invariant measure for $\{T_t\}$. \label{harmonico_e_invariant}
\end{lemma}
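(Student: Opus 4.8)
The plan is to establish the equivalence by exploiting the relationship between the semigroup $\{T_t\}$, its infinitesimal generator $\tfrac{1}{2}\Delta_E$, and the defining integral condition for harmonic measures. First I would recall the stationarity characterization: a probability measure $\mu$ is invariant for $\{T_t\}$ precisely when, for every $f \in \mathcal{B}_b^L$ and every $t \geq 0$, we have $\int_M T_t f \, d\mu = \int_M f \, d\mu$. The natural bridge is to differentiate this identity in $t$, or equivalently integrate the generator, so that the key analytic object becomes
\[
\frac{d}{dt}\int_M T_t f \, d\mu = \int_M \frac{1}{2}\Delta_E (T_t f)\, d\mu .
\]

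For the forward direction (harmonic implies invariant), I would start from the hypothesis $\int_M \Delta_E h \, d\mu = 0$ for all $h \in \mathcal{B}_b^L$. Since $T_t f$ remains in $\mathcal{B}_b^L$ (the semigroup preserves leafwise smoothness and boundedness), one applies the harmonic condition to $h = T_s f$ to get $\int_M \Delta_E(T_s f)\, d\mu = 0$ for each $s$. Using Itô's formula, or rather its consequence that $T_t f - f = \int_0^t \tfrac{1}{2}\Delta_E(T_s f)\, ds$ (Kolmogorov's forward relation for the generator), I would then integrate against $\mu$ and apply Fubini to obtain $\int_M T_t f \, d\mu - \int_M f \, d\mu = \tfrac{1}{2}\int_0^t \left(\int_M \Delta_E(T_s f)\, d\mu\right) ds = 0$, which is exactly invariance.

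For the converse (invariant implies harmonic), I would use that invariance gives $\int_M T_t f \, d\mu = \int_M f \, d\mu$ for all $t$, so the left-hand side is constant in $t$. Differentiating at $t=0$, and using that $\tfrac{1}{2}\Delta_E$ is the infinitesimal generator so that $\lim_{t \to 0^+}\tfrac{1}{t}(T_t f - f) = \tfrac{1}{2}\Delta_E f$, one concludes $\int_M \Delta_E f \, d\mu = 0$ for every $f \in \mathcal{B}_b^L$, which is the harmonic condition.

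The main obstacle is analytic rather than conceptual: I expect the delicate step to be justifying the interchange of limit (or $\tfrac{d}{dt}$) with the integral $\int_M (\cdot)\, d\mu$, and equally the Fubini interchange in the forward direction. These require uniform bounds on the difference quotients $\tfrac{1}{t}(T_t f - f)$ or on $\Delta_E(T_s f)$, which hold because $f$ is bounded and leafwise smooth and the semigroup is a contraction, but making this rigorous is where care is needed. Since the excerpt already records that $\tfrac{1}{2}\Delta_E$ is the generator of $\{T_t\}$ as a corollary of Itô's formula, I would lean on that fact to supply the generator relation cleanly, reducing the proof to the stationarity-versus-generator duality together with the dominated-convergence-type justification for passing the derivative under the integral sign.
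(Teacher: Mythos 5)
Your proposal is correct and follows essentially the same route as the paper: the forward direction via $T_tf - f = \frac{1}{2}\int_0^t \Delta_E(T_sf)\,ds$ and Fubini is identical, and the converse by differentiating the invariance identity at $t=0$ matches the paper's argument. The only cosmetic difference is that the paper justifies the limit--integral interchange in the converse by applying Fatou's lemma to both $f$ and $-f$ (obtaining the two inequalities $\int_M \Delta_E f\,d\mu \le 0$ and $\ge 0$), whereas you appeal to a dominated-convergence-type bound on the difference quotients.
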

\begin{proof} We first observe that for any $f\in\mathcal{B}_b^L$ we have
that $T_tf\in\mathcal{B}_b^L$ and 
\[
 T_tf-f=\frac{1}{2}\int_0^t\Delta_E(T_sf)~ds~ ~~\in\mathcal{B}_b^L.
\]

If $\mu$ is harmonic then, by Fubini's theorem
\[
\int_M(T_tf-f)~ d \mu= \int_0^t\int_M\Delta_E(T_sf)~ d \mu~ds = 0.
\]
Now assume that $\mu$ is invariant.  By Fatou's lemma, we have
\[ 
\int_M\Delta f~d \mu \leq \lim_{t\rightarrow 0}\frac{1}{t}\int_M(T_tf-f)~d \mu
= 0.
\]
The same calculation for $-f$ implies that $ \int_M\Delta f\geq 0$.


\end{proof}

As a consequence of the support theorem (see e.g. Ikeda-Watanabe \cite{IW}),
 it follows that the support 
of a harmonic measure is a saturated set, i.e. it consists in a union of leaves.
Next theorem extends  results of Garnett \cite{Garnett} on existence of harmonic
measures.


\begin{theorem} \label{z31}Let $M$ be a foliated Riemannian
manifold
\begin{itemize}
\item[1)] If $M$ is compact then there exist  harmonic probability measures;
\item[2)] If the leaves are stochastically complete and there exists a smooth
function
 $\varphi \geq 0$ on $M$ such that $$\lim_{d(x,x_0)\rightarrow \infty} \Delta_E
\varphi(x) = - \infty, $$ then there exist 
harmonic probability measures.
 \item[3)]  If $f\in\mathcal{B}_b^L$  such that $\Delta_E f\equiv 0$ then   $f$
is
constant in the leaves of the support of any harmonic probability measure.
\end{itemize}
\end{theorem}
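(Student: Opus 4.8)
The plan is to treat (1) and (2) together by producing a genuine invariant probability measure and then invoking Lemma \ref{harmonico_e_invariant}, which identifies invariance with harmonicity. For both parts I would use Krylov--Bogolyubov averaging: fix a base point $x_0$ and set $\mu_t=\frac1t\int_0^t T_s^\ast\delta_{x_0}\,ds$, so that $\int f\,d\mu_t=\frac1t\int_0^t T_sf(x_0)\,ds$ for $f\in\mathcal{B}_b^L$. Using the semigroup property and $\|T_sf\|_\infty\le\|f\|_\infty$, one gets the telescoping bound $\big|\int T_rf\,d\mu_t-\int f\,d\mu_t\big|=\frac1t\big|\int_t^{t+r}T_sf(x_0)\,ds-\int_0^rT_sf(x_0)\,ds\big|\le \frac{2r\|f\|_\infty}{t}$, which tends to $0$. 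Hence any weak-$\ast$ limit point $\mu$ of $\mu_t$ along a sequence $t_n\to\infty$ satisfies $\int T_rf\,d\mu=\int f\,d\mu$ for every $f\in\mathcal{B}_b^L$ and every $r>0$; that is, $\mu$ is invariant, hence harmonic. Passing the limit inside both integrals requires the Feller property of $\{T_t\}$ on $\mathcal{B}_b^L$, which I would record as the one continuity input needed here.

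For part (1), compactness of $M$ makes the space of probability measures weak-$\ast$ compact, so a limit point $\mu$ exists automatically and is a probability measure; the argument above then finishes. For part (2) the only extra work is to rule out escape of mass, i.e. to prove tightness of $\{\mu_t\}$. Stochastic completeness of the leaves guarantees that the FoBM is defined for all time, so $T_t1=1$ and each $\mu_t$ is a true probability. To obtain tightness I would feed the Lyapunov function $\varphi$ into Dynkin's formula, $\me[\varphi(B_t^{x_0})]=\varphi(x_0)+\frac12\int_0^t\me[\Delta_E\varphi(B_s^{x_0})]\,ds$, and use $\varphi\ge0$ to get $\int_M\Delta_E\varphi\,d\mu_t\ge-2\varphi(x_0)/t$. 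Since $\Delta_E\varphi$ is bounded above (it is continuous and tends to $-\infty$) and is $\le-C_R$ outside the metric ball $K_R$ with $C_R\to\infty$, splitting $\int_M\Delta_E\varphi\,d\mu_t$ over $K_R$ and its complement yields $\mu_t(M\setminus K_R)\le (A+2\varphi(x_0))/C_R$ uniformly for $t\ge1$, where $A=\sup_M\Delta_E\varphi$. This is tightness, and Prokhorov's theorem supplies a probability limit point. The main obstacle in (2) is making the Dynkin estimate rigorous for the possibly unbounded $\varphi$: I would introduce a localizing sequence of stopping times (exit times of the balls $K_R$), apply the optional-stopping form of the formula, and use stochastic completeness together with $\varphi\ge0$ to pass to the limit.

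Part (3) is the cleanest and rests on the product rule (\ref{z4.1}). Let $\mu$ be any harmonic measure and let $f\in\mathcal{B}_b^L$ satisfy $\Delta_Ef\equiv0$. Since $f$ is bounded and leafwise smooth, so is $f^2\in\mathcal{B}_b^L$, and (\ref{z4.1}) gives $\Delta_E(f^2)=2f\,\Delta_Ef+2|\grade f|^2=2|\grade f|^2$. Applying harmonicity to $f^2$ yields $0=\int_M\Delta_E(f^2)\,d\mu=2\int_M|\grade f|^2\,d\mu$, and since the integrand is nonnegative this forces $|\grade f|^2=0$ $\mu$-almost everywhere. The remaining step is to upgrade this to an everywhere statement on the support: because $\grade f$ is leafwise smooth, hence continuous, the set $\{|\grade f|>0\}$ is open and of zero $\mu$-measure, so it is disjoint from $\operatorname{supp}\mu$; therefore $\grade f\equiv0$ on $\operatorname{supp}\mu$. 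As recalled before the theorem, $\operatorname{supp}\mu$ is saturated, a union of entire leaves, and the vanishing of the leafwise gradient along each such connected leaf forces $f$ to be constant on it. The one point needing care here is precisely this passage from the almost-everywhere identity to the pointwise one, which is handled by continuity of $\grade f$ and the saturation of the support.
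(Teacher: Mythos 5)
Your proposal is correct and follows essentially the same route as the paper: parts (1) and (2) are reduced to the existence of an invariant measure via Lemma \ref{harmonico_e_invariant}, and part (3) is the identical computation $\int_M|\grade f|^2\,d\mu=\frac{1}{2}\int_M\Delta_E f^2\,d\mu=0$ from Equation (\ref{z4.1}) followed by continuity of $|\grade f|$ on the saturated support. The only difference is that you unpack the two existence results the paper simply cites (Krylov--Bogolyubov averaging for the compact case and the Khas'minskii/Lyapunov tightness criterion for the noncompact case), which makes the argument self-contained but does not change the underlying approach.
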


\begin{proof} Item (1) follows directly from Lemma \ref{harmonico_e_invariant}
and the existence of invariant
measure of diffusions in compact manifolds.


\bigskip

Item (2) is a consequense of Khas'minskii criterium for existence of invariant
measures, 
see e.g. Lorenzi and Bertoldi \cite[p.172]{Lorenzi-Bertoldi}.

\bigskip

For (3), let $f:M\rightarrow\mr$ be a leafwise harmonic
function ($\Delta_Ef=0$) and $\mu$ a harmonic measure. Using Equation
(\ref{z4.1}) one finds 
\[
\int_M |\grade f|^2(x)\ \mu(dx)=
\frac{1}{2}\int_M \Delta_E f^2(x)\ \mu(dx)     =0 .
\]  
Then, by continuity of $ |\grade f|$,  $f$ is
constant in the leaves of the support of $\mu$.

\end{proof}

\noindent \textit{Remark:} Stochastic completeness in item (2) of the
above Theorem can be
 guaranteed by well known geometrical conditions on the leaves, say, for example
if 
the leaves are complete and 
\[
 \int_{c>0} ^{\infty } \frac{r ~dr}{\ln |B(r)|} = \infty,
\]
where $|B(r)|$ is the volume of the geodesic ball of radius $r$, see Grigorian
\cite[Thm. 9.1, p.184]{Grigorian2} or yet, in terms of
curvature, if we have
lower bounds in the 
Ricci curvature, Elworthy \cite{Elworthy}. 

\bigskip




Given  $M$ and $M'$ two
foliated Riemannian manifolds, we say that a smooth map $\phi:M\rightarrow M' $ 
 is a foliated map if it preserves leaves. 
If also $\phi_* \Delta_E = \Delta_E'$, and $\mu$ is a harmonic measure in $M$,
note
that 
the induced measure $\phi_* \mu$ is a harmonic measure in $M'$.

\bigskip

Our formalism allows a direct proof of the result on superharmonic
functions on foliations in Adams \cite{adams}:

\begin{theorem}[Adams] \label{z32}
Let $\mu$ be a harmonic probability measure.
Consider $f:M\rightarrow(0,\infty)$ a bounded measurable function such that, for
$\mu$-a.s.
$\Delta_Ef\leq0$. Then $f$ is constant in each leaf $\mu$-a.s.
\end{theorem}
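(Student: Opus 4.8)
The plan is to feed a well-chosen nonlinear function of $f$ into the harmonicity of $\mu$, exploiting the second-order chain rule for the foliated Laplacian. For any smooth $\phi:\mathbf{R}\to\mathbf{R}$, a leafwise computation with the definition of $\hess_E$ (equivalently, an application of formula (\ref{z4.1})) gives
\[
\Delta_E(\phi\circ f)=\phi'(f)\,\Delta_E f+\phi''(f)\,|\grade f|^2,
\]
which is valid since $f$ is leafwise smooth. The whole point is to choose $\phi$ so that both summands on the right have the \emph{same} sign $\mu$-a.s.; monotonicity and concavity of $\phi$ are exactly what achieve this.

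Concretely, I would take $\phi$ smooth, strictly increasing and strictly concave on $(0,\infty)$ and, crucially, \emph{bounded}, e.g.\ $\phi=\arctan$. Then $\phi\circ f\in\mathcal{B}_b^L$, so harmonicity of $\mu$ yields $\int_M\Delta_E(\phi\circ f)\,d\mu=0$, that is,
\[
\int_M\phi'(f)\,\Delta_E f\,d\mu+\int_M\phi''(f)\,|\grade f|^2\,d\mu=0.
\]
Because $\phi'(f)>0$ and, by hypothesis, $\Delta_E f\le 0$ $\mu$-a.s., the first integrand is $\le 0$ $\mu$-a.s.; because $\phi''(f)<0$ on $(0,\infty)$, the second integrand is also $\le 0$ $\mu$-a.s. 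Two nonpositive quantities summing to zero must each vanish, so in particular $\phi''(f)\,|\grade f|^2=0$ $\mu$-a.s. Since $\phi''(f)<0$ pointwise (as $f>0$), this forces $|\grade f|^2=0$ $\mu$-a.s.

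It then remains to upgrade ``$|\grade f|=0$ $\mu$-a.s.'' to ``$f$ constant on each leaf $\mu$-a.s.'', which I would do exactly as in the last step of Theorem \ref{z31}(3): the support of $\mu$ is saturated, and by continuity of $|\grade f|$ the closed full-measure set $\{|\grade f|=0\}$ contains $\operatorname{supp}\mu$, so $\grade f$ vanishes identically along each leaf of the support, whence $f$ is constant on such leaves by connectedness. Since $\mu$-a.e.\ point lies in $\operatorname{supp}\mu$, this is the desired conclusion.

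The main obstacle is not the sign bookkeeping but the choice of test function and the attendant integrability. The most natural concave choice, $\phi=\log$ (giving $\Delta_E\log f=\Delta_E f/f-|\grade f|^2/f^2$), is unavailable in general: $f$ is only assumed positive and bounded above, so $\log f$ need not belong to $\mathcal{B}_b^L$ when $f$ approaches $0$, which is precisely why a bounded $\phi$ such as $\arctan$ is forced upon us. I would also have to check that $\int_M\Delta_E(\phi\circ f)\,d\mu=0$ is legitimate for $g=\phi\circ f$, i.e.\ that $\Delta_E g$ is $\mu$-integrable; this follows from the invariance characterization of harmonic measures (Lemma \ref{harmonico_e_invariant}) together with the boundedness of $\phi$ and of its first two derivatives on the range of $f$, or via a routine truncation argument.
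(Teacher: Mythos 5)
Your proposal is correct and follows essentially the same route as the paper: apply the chain rule $\Delta_E(\phi\circ f)=\phi'(f)\,\Delta_E f+\phi''(f)\,|\grade f|^2$ to a bounded, increasing, concave $\phi$, integrate against the harmonic measure, and conclude that $|\grade f|^2=0$ $\mu$-a.s. The paper simply chooses $\phi(t)=\ln(1+t)$ (bounded on the range of $f$ because $f$ is bounded, which sidesteps the $\log f$ issue you flag) instead of $\arctan$, but the argument is the same.
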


\begin{proof} Using the fact that for any smooth function $\phi$ we have
that $$\Delta_E (\phi \circ f)= \phi'(f(x))  \Delta_E f(x) + \phi''(f(x))
|\grade f (x)|^2$$ then  for  $u=\ln(f+1)$,  $u$ is
positive, belongs to $\mathcal{B}_b^L$ and satisfies
\begin{eqnarray}\label{eq en u}
|\grade u|^2+\Delta_Eu~ =~\frac{\Delta_Ef}{f+1}~\leq ~0,
\end{eqnarray}
on $\mu$-a.s., thus 
\[
 \int_M|\grade u|^2~ d \mu\leq0,
\]
hence $\grade f = (1 + f)~ \grade u$ vanishes.

\end{proof}

\noindent \textbf{Example 3:} 
Consider a torus $\mathbf{T}^2 \subset \mr^3$ immersed isometrically with
the covering coordinate system
$\phi:\R^2\rightarrow
\mathbf{T}^2$ given by
\[
\phi(x,y)=\big( (b+\cos(x))\cos(y),(b+\cos(x))\sin(y),\sin(x) \big),
\]
with $b>1$. In these coordinates the induced metric is
\[
g=dx^2+(b+\cos(x))^2dy^2,
\]
and the associated Riemannian connection $\nabla$ is characterised by
\[
\nabla_{\partial_x}\partial_x=0,\hspace{.5cm}\nabla_{\partial_x}\partial_y
=\frac{-\sin(x)}{b+\cos(x)}\partial_y,
\hspace{.5cm}\nabla_{\partial_y}\partial_y={(b+\cos(x))}{\sin(x)}\partial_x.
\]

Consider the foliation $E$ on $\mathbf{T}^2$ generated by 
\[
Y=\frac{1}{\sqrt{\alpha^2+1}}\left(\alpha\partial
_x+\frac{1}{(b+\cos(x))}\partial_y\right).
\]
The leaf through $(x_0,y_0)$ is the flow line
of  $Y$ through $(x_0,y_0)$, where the flow $\psi$ of $Y$  can be
represented in local coordinates as
$\psi_t(x_0,y_0)=\left(x_t\:,y_t\right)$ with
\begin{eqnarray*}
x_t&=&x_0+\frac{\alpha\:t}{\sqrt{1+\alpha^2}}\hspace{6.7cm}   (\mbox{mod}\ 
2\pi), \\
y_t&=&y_0+A+
\frac{2}{\alpha\:\sqrt{b^2-1}}\arctan\left(\sqrt{\frac{b-1}{b+1}}
\tan\left(\frac{x_t}{2}\right)\right)\hspace{.6cm} (\mbox{mod}\ 
2\pi), \\
A&=&\frac{-2}{\alpha\:\sqrt{b^2-1}}\arctan\left(\sqrt{\frac{b-1}{b+1}}
\tan\left(\frac{x_0}{2}\right)\right)\hspace{2.3cm} (\mbox{mod}\ 
2\pi).
\end{eqnarray*}
We observe that
\[
\nabla_YY=\frac{\sin(x)}{(1+\alpha^2)(b+\cos(x))}\:\partial_x-
\frac{\alpha\sin(x)}{(1+\alpha^2)(b+\cos(x))^2}\:\partial_y.
\]
Then $\nabla_Y^EY=0$ and $\Delta_E=Y^2$. So, a FoBM in
$\mathbf{T}^2$ is a solution of the  stochastic differential
equation
\begin{eqnarray*}
d W&=&Y(W)~\circ d B\\
W_0&=&p_0\in M
\end{eqnarray*} with $B$ the Brownian motion in $\mathbf{R}$. Therefore the FoBM
can be written in coordinates as the solution of
\begin{eqnarray*}
dX_t&=& \frac{\alpha}{\sqrt{1+\alpha^2}}\:dB_t, \\
dY_t&=&\frac{1}{(b+\cos(X_t))\sqrt{1+\alpha^2}}\:dB_t
+\frac{1}{2}\frac{\alpha\sin(X_t)}{(1+\alpha^2)(b+\cos(X_t))^2}\:dt.
\end{eqnarray*}
So,
\begin{eqnarray*}
X_t&=&x_0+\frac{\alpha}{\sqrt{1+\alpha^2}}\:B_t\hspace{1cm}{(\mbox{mod}\ 
2\pi)} ,\\
Y_t&=&y_0+A+\frac{2}{\alpha\:\sqrt{b^2-1}}\arctan
\left(\sqrt{\frac{b-1}{b+1}}\tan\left(\frac{X_t}{2}\right)\right)\\
&&\hspace{8cm}(\mbox{mod}\ 
2\pi).
\end{eqnarray*}

Therefore, cf. Proposition \ref{1-dim FoBM}, the FoBM starting at $(x_0,y_0)$ is
given by $
W_t=\psi_{B_t}(x_0,y_0)$. A measure $\mu=h\:\mu_g$ is harmonic for a  smooth
function $h$ 
 if and only if  $h$ satisfies
\[
Y^2(h)+2~\textrm{div}(Y)Y(h)+\left( Y(\textrm{div}(Y))+\textrm{div}(Y)^2 \right)
h=0.
\]
Considering the case of $h$ depending only on $x$, equation above reduces to 
\[
(b+\cos(x))h''(x)-2\sin(x) h'(x)-\cos(x)h(x)=0,
\]
whose unique non-trivial normalized periodic solution is 
\[
h(x)=\frac{1}{4\pi^2}\frac{1}{(b+\cos(x))}.
\]

\eop






In a foliated space $M$ with orientable leaves that admit a holonomy invariant
measure $ \nu$, a harmonic measure can be constructed in terms of $\nu$. The 
$p$-current
 $\varphi_\nu$ associated to $\nu$ is the functional in $\Lambda^p (M)$ given by
\[
\varphi_\nu (\omega) = \sum_{\alpha \in \mathcal{U}}~\int_{S_\alpha} \left(
\int_P \lambda_\alpha \omega \right) ~d\nu (P)
\]
where $\lambda_\alpha$ is a partition of unity subordinated to a foliated
atlas $\mathcal{U}$, $P$ are plaques in $U_\alpha\in\mathcal{U}$ and $S_\alpha$
is transversal in $U_\alpha$ (see Plante \cite[p.330]{plante} and Candel
\cite[p.235]{candel1}). 


The measure $\mu_{\nu} $ associated to the positive functional
$f\mapsto \varphi_{\nu}(f\chi_E)$ is called in the literature a 
\textit{ totally  invariant measure} (e.g. \cite{plante}). Such associated
measures have a further characterization which generalizes similar result in
\cite{Garnett}:

\begin{theorem} Let $M$ be a compact foliated Riemannian manifold leafwise
orientable.
A measure $\mu$ is totally invariant if and only if 
\[
 \int_M \dive X ~d\mu=0
\]
 for any $X\in \Gamma(E)$.

\end{theorem}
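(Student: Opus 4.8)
The plan is to prove the two implications separately; in both directions the bridge between the analytic object $\dive X$ and the geometric current $\varphi_\nu$ is the identity of Lemma \ref{z5}, namely $(\dive X)\,\chi_E = L_X\chi_E$.

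For the forward implication, suppose $\mu=\mu_\nu$ is totally invariant, with $\nu$ a holonomy invariant transverse measure. By the defining property $\int_M f\,d\mu=\varphi_\nu(f\chi_E)$ we have, for $X\in\Gamma(E)$,
\[
\int_M \dive X\,d\mu = \varphi_\nu\big((\dive X)\,\chi_E\big)=\varphi_\nu(L_X\chi_E),
\]
the last step by Lemma \ref{z5}. Cartan's formula gives $L_X\chi_E = d\,i_X\chi_E + i_X\,d\chi_E$. I would then use the classical equivalence (Plante \cite{plante}, Candel \cite{candel1}) between holonomy invariance of $\nu$ and closedness of the associated current $\varphi_\nu$, which kills the term $\varphi_\nu(d\,i_X\chi_E)$. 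For the second term I observe that $\varphi_\nu$ only evaluates the restriction of a $p$-form to the leaves, while $(i_X\,d\chi_E)$ restricted to $\Lambda^p E$ equals $d\chi_E(X,\cdot,\ldots,\cdot)$ with $p+1$ arguments in the rank-$p$ bundle $E$, hence vanishes identically. Thus $\int_M\dive X\,d\mu=0$.

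For the converse, assume $\int_M\dive X\,d\mu=0$ for every $X\in\Gamma(E)$. I would work in a foliated chart $U\cong P\times S$ and disintegrate $\mu|_U=\int_S \mu_s\,d\bar\mu(s)$ over the transversal. Testing the hypothesis against fields of the form $X=\phi(s)\,\tilde X$, with $\tilde X$ tangent to the plaques and $\phi$ a transverse cutoff, and using the elementary identity $\dive(\phi\tilde X)=\phi\,\dive\tilde X$ (since $\tilde X(\phi)=0$), one obtains $\int_{P_s}\dive\tilde X\,d\mu_s=0$ for $\bar\mu$-a.e.\ $s$ and all such $\tilde X$. Regarding $\mu_s$ as a distribution on the plaque, this says $\mu_s$ annihilates the image of $\dive$ acting on compactly supported leafwise fields; since, by Stokes together with $H^p_c(P)\cong\mathbf{R}$ detected by $\omega\mapsto\int_P\omega$, that image is exactly the $\chi_E$-mean-zero functions, I conclude $\mu_s=c_s\,\chi_E$. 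Hence locally $\mu=\chi_E\otimes\beta$ with $d\beta(s)=c_s\,d\bar\mu(s)$.

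It then remains to assemble the local transverse measures $\beta$ into a single holonomy invariant $\nu$ with $\mu=\mu_\nu$. The key point is that in $\mu=\chi_E\otimes\beta$ the whole volume is carried by the intrinsic leaf volume $\chi_E$, which does not depend on the chart; consequently on overlaps the transverse measures are matched by the holonomy transition maps with no additional Jacobian factor, so $\nu$ is genuinely holonomy invariant and $\mu=\mu_\nu$. I expect the converse to be the main obstacle: making the disintegration and the distributional identity $\mu_s=c_s\chi_E$ rigorous --- in particular the surjectivity of $\dive$ onto $\chi_E$-mean-zero functions with control on supports --- and then checking carefully that intrinsicality of $\chi_E$ upgrades local consistency of the $\beta$'s to true holonomy invariance.
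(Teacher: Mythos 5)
Your proof is correct, and while your forward implication coincides with the paper's (both write $(\dive X)\chi_E=L_X\chi_E=d\,i_X\chi_E+i_X d\chi_E$, kill the exact term by closedness of the Ruelle--Sullivan current $\varphi_\nu$, and observe that $i_Xd\chi_E$ vanishes on $\Lambda^p E$ since $E$ has rank $p$), your converse takes a genuinely different route. The paper first notes that the hypothesis applied to $X=\grade f$ makes $\mu$ harmonic, then invokes Garnett's local structure theorem: a harmonic measure disintegrates in a foliated chart as $h_i\,\chi_E\otimes\nu_i$ with $h_i$ leafwise harmonic and positive; an integration by parts along the plaques then reduces $\int_M\dive X\,d\mu$ to $-\sum_i\int_{T_i}\int\lambda_i X(h_i)\,\chi_E\,d\nu_i$, so the hypothesis forces each $h_i$ to be leafwise constant and $\nu=\sum_i h_i\nu_i$ is holonomy invariant. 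You instead bypass harmonicity and Garnett's theorem altogether: you disintegrate $\mu$ directly over the transversal and use the full strength of the hypothesis (tested against all leafwise fields, not merely gradients) together with the leafwise de Rham identification of $\{\dive \tilde X:\tilde X\ \text{compactly supported in a plaque}\}$ with the $\chi_E$-mean-zero functions to conclude that the conditional measures are multiples of $\chi_E$. The paper's route buys brevity at the cost of quoting Garnett's structure theorem as a black box; yours buys self-containedness and makes visible that the divergence condition, being a priori stronger than harmonicity, already pins down the leafwise conditionals with no harmonic-function theory. The price is the measure-theoretic overhead you flag yourself: a regular atlas with relatively compact plaques (so that $c_s\chi_E$ has finite mass), a countable dense family of test fields to control the $\tilde X$-dependent null sets, and the final verification that equality of the $\chi_E$-normalized conditionals on chart overlaps forces the local transverse measures to correspond under holonomy --- an assembly step the paper also leaves implicit.
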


\begin{proof}  Let $\mu$ be a totally invariant measure associated to $\nu$.
We have to prove that
\[
 \varphi_{\nu}( \dive X \chi_E )=0.
\]
 But 
\[
   \dive (X) \chi_E = di_X \chi_E + i_X d\chi_E,
 \]
 $\varphi_{\nu} (d \alpha) =0$ and 
$i_{X}\left( d\chi_E \right)=0$ restricted to the leaves 
(cf. \cite[p.69]{Tondeur}).

For the converse, note that $\mu$ is harmonic. There exists a  foliated atlas
$\{U_i\simeq T_i\times P\}$ and an associated  family of leafwise positive
harmonic functions $h_i: U_i \rightarrow \R$ with corresponding transverse
measures $\nu_i$  such that for any measurable function $f$,   
\[
\int_{U_i}  f ~d\mu= \int_{T_i} \int_{\{t\}\times D_t} f \chi_E ~d\nu_i
\]
see Garnett \cite[Theorem 1-c]{Garnett} or Candel and Conlon \cite[Vol II,
Prop.2.4.10]{candel}. We take a partition of unity $\{\lambda_i\}$ subordinated
to the foliated atlas  $\{U_i\simeq T_i\times P\}$. Hence

\begin{eqnarray*}
  \int_M\dive(X)~d\mu&=& \sum_i \int_M\dive(\lambda_i X)\mu\\
&=&\sum_i \int_{T_i}\left(\int_{\{t\}\times D_t}\dive(\lambda_i
X)h_i~\chi_E(t)\right)d \nu_i(t)\\
&=&\sum_i \int_{T_i}\left(\int_{\{t\}\times D_t}\dive(h_i\lambda_i
X)~\chi_E(t)\right)d \nu_i(t)\\
&&-\sum_i \int_{T_i}\left(\int_{\{t\}\times D_t}\lambda_i X
(h_i)~\chi_E(t)\right)d \nu_i(t)\\
&=&-\sum_i \int_{T_i}\left(\int_{\{t\}\times D_t}\lambda_i X
(h_i)~\chi_E(t)\right)d \nu_i(t)\\
\end{eqnarray*}
where each $\nu_i=p_*(\mu|_{U_i})$ is a measure over $T_i$ induced by the
projection $p:U_i\rightarrow T_i$. Thus, $ \int_M\dive(X)\mu=0$ for any $X$ if
and only if $h_i$ is constant along the leaf, that is $\mu$ is associated to the
transverse measure $\nu= \sum_i h_i \nu_i$ which is invariant under holonomy
transformations.

\end{proof}


\begin{corollary}
 Let $M$ be a compact foliated Riemannian manifold leafwise orientable. A
measure $\mu$ is totally invariant if and only if 
\[
 \int_M\textrm{div}(X)~\mu=-\int_M \kappa^{\flat}(X)~\mu
\]
for all $X\in\Gamma(E)$.
\end{corollary}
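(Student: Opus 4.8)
The plan is to reduce the corollary to the previous theorem by relating the foliated divergence $\dive X$ to the ordinary (ambient) divergence $\mathrm{div}(X)$ through the section $\kappa$. Recall from Equation (\ref{rmk4}) in the excerpt that $\kappa$ was characterized precisely by the identity $\kappa^{\flat}(X)=\dive(X)-\mathrm{div}(X)$ for all $X\in\Gamma(E)$. Rearranging this pointwise identity gives
\[
\dive(X)=\mathrm{div}(X)+\kappa^{\flat}(X),
\]
which is the key algebraic relation I will integrate against $\mu$.

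First I would integrate both sides of the displayed identity with respect to the measure $\mu$, using linearity of the integral:
\[
\int_M \dive(X)\,d\mu=\int_M \mathrm{div}(X)\,d\mu+\int_M \kappa^{\flat}(X)\,d\mu.
\]
Since $\kappa\in\Gamma(E)$ and $X\in\Gamma(E)$, the pairing $\kappa^{\flat}(X)$ is a genuine smooth function on $M$, so the last integral is well defined and the rearrangement is valid for every $X\in\Gamma(E)$.

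Next I would invoke the theorem proved just above (the characterization of totally invariant measures), which states that $\mu$ is totally invariant if and only if $\int_M \dive(X)\,d\mu=0$ for all $X\in\Gamma(E)$. Substituting this condition into the integrated identity, I see that $\int_M \dive(X)\,d\mu=0$ holds for all $X$ exactly when
\[
\int_M \mathrm{div}(X)\,d\mu=-\int_M \kappa^{\flat}(X)\,d\mu
\]
holds for all $X\in\Gamma(E)$. Thus $\mu$ is totally invariant if and only if the asserted equality holds, which is precisely the statement of the corollary.

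I do not expect any serious obstacle here, since the corollary is essentially a bookkeeping consequence of combining the defining relation (\ref{rmk4}) for $\kappa$ with the theorem. The only point requiring a moment of care is making sure the quantifier ``for all $X\in\Gamma(E)$'' is handled correctly in both directions of the biconditional: because the identity $\dive(X)=\mathrm{div}(X)+\kappa^{\flat}(X)$ holds identically (not merely in the mean) for each fixed $X$, the equivalence of the two families of integral conditions is immediate and passes through the quantifier without difficulty.
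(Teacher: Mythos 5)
Your proposal is correct and follows exactly the paper's own (one-line) argument: combine the characterization $\int_M \dive(X)\,d\mu=0$ from the preceding theorem with the pointwise identity $\kappa^{\flat}(X)=\dive(X)-\mathrm{div}(X)$ of Equation (\ref{rmk4}). Nothing is missing; the quantifier handling you note is indeed the only point of care, and it goes through as you describe.
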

\begin{proof}
 It follows directly from Equation (\ref{rmk4}).
\end{proof}


%


\begin{proposition}
Let $M$ be a compact foliated Riemannian manifold leafwise orientable
 with $\nu$  a holonomy invariant measure.  If $\mu $ and  $\widetilde{\mu} $
are two harmonic
measures such that $\tilde \mu$ has a Radon-Nikodym derivative
$h$ belonging to $\mathcal{B}_b^L$ with respect to $\mu$, then $h$ is constant
in the leaf ($\mu$-a.e).
\end{proposition}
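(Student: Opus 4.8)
The plan is to run the same energy estimate used in the proofs of Theorem \ref{z31}(3) and Theorem \ref{z32}, now feeding in the harmonicity of \emph{both} measures through the product formula (\ref{z4.1}). Since $h\in\mathcal{B}_b^L$ is bounded and leafwise smooth, both $h$ and $h^2$ again lie in $\mathcal{B}_b^L$, so they are admissible test functions for the harmonicity of $\mu$ and of $\widetilde{\mu}=h\,\mu$. The whole computation is algebraic, so I expect no serious analytic obstacle; the only delicate point is the very last step.

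First I would apply harmonicity of $\mu$ to the function $h^2$ and expand using (\ref{z4.1}) with $f=h$, noting that $\Delta_E(h^2)=2h\,\Delta_E h+2|\grade h|^2$:
\[
0=\int_M\Delta_E(h^2)\,d\mu=2\int_M h\,\Delta_E h\,d\mu+2\int_M|\grade h|^2\,d\mu .
\]
Next I would use that $\widetilde{\mu}$ is harmonic, tested against $h$ itself, together with $d\widetilde{\mu}=h\,d\mu$:
\[
0=\int_M\Delta_E h\,d\widetilde{\mu}=\int_M h\,\Delta_E h\,d\mu ,
\]
which shows that the cross term in the previous display vanishes. Combining the two displays yields the energy identity
\[
\int_M|\grade h|^2\,d\mu=0 .
\]

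From here the conclusion is the usual one: the integrand is nonnegative, so $\grade h=0$ holds $\mu$-almost everywhere. Because $h$ is leafwise smooth, $|\grade h|$ is continuous along each leaf, and since the support of a harmonic measure is a saturated set (a union of leaves, as noted after Lemma \ref{harmonico_e_invariant}), the local product description of $\mu$ in a foliated chart disintegrates the $\mu$-null condition into a leafwise-volume-null condition on $\mu$-almost every plaque. Continuity of $\grade h$ along the leaf then upgrades the almost-everywhere vanishing to genuine vanishing on the whole leaf through $\mu$-almost every point, exactly as in the final step of Theorem \ref{z31}(3). Hence $h$ is constant in the leaf, $\mu$-a.e.

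The step I expect to require the most care is precisely this passage from ``$\grade h=0$ $\mu$-a.e.'' to honest leafwise constancy: a priori the energy bound only controls $h$ on a $\mu$-full set, and one must invoke leafwise smoothness of $h$ together with the saturated structure of $\mathrm{supp}\,\mu$ (via Garnett's local disintegration) to conclude constancy on each leaf rather than merely on a dense subset. The compactness of $M$ and the holonomy invariant measure $\nu$ guarantee the ambient framework in which these harmonic measures and their local product form exist, but they play no role in the core identity above.
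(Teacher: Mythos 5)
Your proposal is correct and is essentially the paper's own argument: the authors also combine the product formula (\ref{z4.1}) with harmonicity of $\mu$ tested on $h^2$ and harmonicity of $\widetilde{\mu}$ tested on $h$ to get $\int_M|\grade h|^2\,d\mu=0$, and then conclude leafwise constancy. Your closing discussion of upgrading the $\mu$-a.e.\ vanishing of $\grade h$ to constancy on whole leaves is just a more explicit rendering of the step the paper leaves implicit (as in Theorem \ref{z31}(3)).
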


\begin{proof} 



We have by Equation (\ref{z4.1}),
\[
\int_M|\grade h|^2\mu = \frac{1}{2}\int
\Delta_Eh^2~\mu
-\int_M(\Delta_Eh)~\tilde\mu = 0.
\]

\end{proof}


Harmonic measures which are absolutely continuous with respect to 
the Riemanian volume $\mu_g$ are characterized in the following:

\begin{theorem}
Let $M$ be a compact foliated Riemannian manifold
without boundary and $h$ be a non negative function which is
in $\mathcal{B}_b^L$. Then $h\mu_g$ is harmonic if and only if $h$
satisfies
\begin{equation} \label{z33}
\mathrm{div}(\grade h-h\kappa)=0\hspace{1cm}\mu_g-\textrm{a.s.}
\end{equation}
\end{theorem}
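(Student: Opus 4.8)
The plan is to show that, with respect to the Riemannian volume $\mu_g$ on the compact boundaryless manifold $M$, the operator $\Delta_E$ has formal adjoint $h\mapsto \mathrm{div}(\grade h - h\kappa)$; harmonicity of $h\mu_g$ then reads exactly as the vanishing of this adjoint applied to $h$. The starting point is the pointwise identity $\Delta_E f = \mathrm{div}(\grade f)+\kappa(f)$, valid for any leafwise-smooth $f$. This follows by combining $\Delta_E f=\dive(\grade f)$ (Definition \ref{z2}) with the characterization (\ref{rmk4}) of $\kappa$, namely $\kappa^{\flat}(X)=\dive(X)-\mathrm{div}(X)$ for $X\in\Gamma(E)$, applied to $X=\grade f$, together with the fact that $\kappa\in\Gamma(E)$, so that $\kappa^{\flat}(\grade f)=g(\kappa,\grad f)=\kappa(f)$.

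First I would substitute this identity into $\int_M(\Delta_E f)\,h\,d\mu_g$ and integrate by parts once. Using the divergence theorem $\int_M\mathrm{div}(Z)\,d\mu_g=0$ applied to $Z=h\,\grade f$, and noting that $g(\grad h,\grade f)=g(\grade h,\grade f)$ is a purely leafwise derivative of $h$ (hence well defined for $h\in\mathcal{B}_b^L$), one obtains the symmetric Dirichlet-type form
\[
\int_M(\Delta_E f)\,h\,d\mu_g=-\int_M g\big(\grade f,\ \grade h-h\kappa\big)\,d\mu_g .
\]
Setting $V=\grade h-h\kappa\in\Gamma(E)$ and integrating by parts a second time, via the divergence theorem applied to $fV$ and the identity $\mathrm{div}(fV)=f\,\mathrm{div}(V)+g(\grade f,V)$, I would arrive at
\[
\int_M(\Delta_E f)\,h\,d\mu_g=\int_M f\,\mathrm{div}(\grade h-h\kappa)\,d\mu_g .
\]

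Finally, $h\mu_g$ is harmonic precisely when the left-hand side vanishes for every $f\in\mathcal{B}_b^L$, so the identity above shows this is equivalent to $\int_M f\,\mathrm{div}(\grade h-h\kappa)\,d\mu_g=0$ for all such $f$. Since $\mathcal{B}_b^L$ contains every function smooth on $M$, the fundamental lemma of the calculus of variations forces $\mathrm{div}(\grade h-h\kappa)=0$ $\mu_g$-a.s., and the converse implication is immediate. The main obstacle is regularity: for $h\in\mathcal{B}_b^L$ the field $\grade h-h\kappa$ need not be transversally differentiable, so the second integration by parts and the very meaning of $\mathrm{div}(\grade h-h\kappa)$ must be read in the weak (distributional) sense, which is exactly what the qualifier ``$\mu_g$-a.s.'' records. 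I would therefore keep the first, symmetric form as the genuinely rigorous identity, since its integrand involves only leafwise derivatives of $h$, and interpret the divergence equation as its weak reformulation.
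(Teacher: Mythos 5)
Your proof is correct and follows essentially the same route as the paper: both hinge on the identity $\Delta_E f-\kappa f=\mathrm{div}(\grade f)$ from Equation (\ref{rmk4}) and the divergence theorem on the compact boundaryless $M$ to arrive at the adjoint identity $\int_M(\Delta_E f)\,h\,\mu_g=\int_M f\,\mathrm{div}(\grade h-h\kappa)\,\mu_g$, the paper merely phrasing the two integrations by parts as self-adjointness of $\Delta_E-\kappa$ plus a direct expansion of $\mathrm{div}(fh\kappa)$. Your closing remark on the weak interpretation of the divergence for $h\in\mathcal{B}_b^L$ is a sensible clarification the paper leaves implicit.
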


\begin{proof}
Firstly, we claim that the operator $\Delta_E-\kappa$ is self-adjoint. 
In fact, by Equation (\ref{rmk4}) we have that
\[
(\Delta_Ef-\kappa f)\mu_g=\textrm{div}(\grade f)\mu_g. 
\]
Using that 
\[
 \mathrm{div} \left( h ~ \grade f \right) = g(\grade f, \grade h) + h
~\textrm{div}(\grade f),
\]
one finds that
\begin{eqnarray*}
\int_Mh(\Delta_E-\kappa)f\mu_g&=&\int_M h ~ \textrm{div}(\grade f)\mu_g \\
&=&-\int_Mg(\grade f,\grade h)\mu_g \\
&=&\int_M f ~ \textrm{div}(\grade h)\mu_g \\
&=&\int_M f(\Delta_E-\kappa)h\mu_g.
\end{eqnarray*}
For any smooth function $f$ we have that 
\[
 \mathrm{div} (fh\kappa)= h \kappa (f) + f \kappa (h) + fh \mathrm{div}(\kappa).
\]
Hence, 
\begin{eqnarray*}
 \int_M f \mathrm{div}(\grade h-h\kappa)~ \mu_g & =&  \int_Mf(\Delta_Eh-2\kappa
h-\textrm{div}(\kappa)h)\mu_g \\
   & = & \int_Mf(\Delta_Eh-\kappa h)\mu_g+\int_M(\kappa f)h\mu_g\\
&=&\int_M(\Delta_Ef)h\mu_g
\end{eqnarray*}
which vanishes for any $f$ if and only if $h\mu_g$ is harmonic.

\end{proof}

\begin{corollary} \label{div-zero}

Let $M$ be a compact foliated Riemannian manifold
without boundary. Then $ \mathrm{div}(\kappa)=0$ if
and only if for every bounded non-negative leafwise
constant function $h$ the measure $\mu=h\mu_g$ is harmonic.
\end{corollary}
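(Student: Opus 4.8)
The plan is to derive the statement directly from the preceding theorem (criterion~(\ref{z33})) by specializing its divergence condition to leafwise constant densities. First I would record two elementary facts. Since $\kappa$ is a section of $E$ (as observed right after Lemma~\ref{z6}), it is tangent to the leaves; and a bounded non-negative \emph{leafwise constant} function $h$ has vanishing foliated gradient, $\grade h = \pi(\grad h) = 0$, because the tangential part of $\grad h$ is built from derivatives of $h$ in leaf directions, all of which vanish. For the same reason $\kappa(h) = dh(\kappa) = 0$, as $\kappa \in \Gamma(E)$ differentiates $h$ only along the leaves.

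Next I would simplify the criterion $\mathrm{div}(\grade h - h\kappa) = 0$. Using $\grade h = 0$ and the Leibniz rule $\mathrm{div}(h\kappa) = h\,\mathrm{div}(\kappa) + \kappa(h)$ together with $\kappa(h) = 0$, the general density condition collapses to the single identity $h\,\mathrm{div}(\kappa) = 0$, required to hold $\mu_g$-a.s. This is the point on which both implications turn, and it is exactly where the structure of leafwise constant $h$ is exploited.

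For the backward implication, if $\mathrm{div}(\kappa) = 0$ then $h\,\mathrm{div}(\kappa) = 0$ holds trivially for every bounded non-negative leafwise constant $h$, so $h\mu_g$ is harmonic by the preceding theorem. For the forward implication I would apply the hypothesis to the constant function $h \equiv 1$, which is bounded, non-negative and leafwise constant; harmonicity of $\mu_g$ then forces $1 \cdot \mathrm{div}(\kappa) = 0$, i.e. $\mathrm{div}(\kappa) = 0$. I do not expect a genuine obstacle here: the only step deserving care is the verification that both $\grade h$ and $\kappa(h)$ vanish for leafwise constant $h$, and this is precisely what reduces the full density criterion of the previous theorem to the mean-curvature condition $\mathrm{div}(\kappa) = 0$.
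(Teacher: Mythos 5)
Your proof is correct and is precisely the argument the paper intends: the corollary is stated without proof as an immediate specialization of the preceding theorem's criterion $\mathrm{div}(\grade h - h\kappa)=0$, and your reduction via $\grade h=0$ and $\kappa(h)=0$ for leafwise constant $h$ (since $\kappa\in\Gamma(E)$), followed by testing with $h\equiv 1$ for the forward direction, is exactly the intended route. No issues.
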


\begin{corollary}
Let $M$ be a compact foliated Riemannian manifold
without boundary. Then
\[
\int_M|\grade h|^2 ~d\mu_g = -\frac{1}{2}\int_Mh^2\textrm{div}(\kappa) ~\mu_g ,
\]
for any bounded leafwise harmonic function $h$.
\end{corollary}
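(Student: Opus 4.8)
The plan is to reduce the left-hand side to an integral of $\Delta_E(h^2)$ and then transfer the two derivatives off $h^2$ by two applications of the divergence theorem on the closed manifold $M$. First I would apply the product rule (\ref{z4.1}) with both arguments equal to $h$, using that $h$ is leafwise harmonic ($\Delta_E h=0$), to obtain
\[
\Delta_E(h^2)=2h\,\Delta_E h+2\,|\grade h|^2=2\,|\grade h|^2 .
\]
Integrating against $\mu_g$ turns the claim into the identity $\int_M|\grade h|^2\, d\mu_g=\tfrac12\int_M\Delta_E(h^2)\, d\mu_g$, so it remains to evaluate $\int_M\Delta_E(h^2)\,\mu_g$.

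For the second step I would invoke the identity (\ref{rmk4}) exactly as in the proof of the previous theorem: for any function $u$ one has $(\Delta_E u-\kappa u)\,\mu_g=\mathrm{div}(\grade u)\,\mu_g$, where $\mathrm{div}$ is the Riemannian divergence on $M$ and $\kappa u=du(\kappa)=g(\kappa,\grade u)$. Since $M$ is compact without boundary, $\int_M\mathrm{div}(\grade u)\,\mu_g=0$, hence $\int_M\Delta_E u\, d\mu_g=\int_M\kappa u\, d\mu_g$ for every $u$; taking $u=h^2$ gives $\int_M\Delta_E(h^2)\,\mu_g=\int_M\kappa(h^2)\,\mu_g$. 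For the third step I would expand the Riemannian divergence of the vector field $h^2\kappa$, namely
\[
\mathrm{div}(h^2\kappa)=g\bigl(\grad(h^2),\kappa\bigr)+h^2\,\mathrm{div}(\kappa)=\kappa(h^2)+h^2\,\mathrm{div}(\kappa),
\]
and integrate once more: $\int_M\mathrm{div}(h^2\kappa)\,\mu_g=0$ yields $\int_M\kappa(h^2)\,\mu_g=-\int_Mh^2\,\mathrm{div}(\kappa)\,\mu_g$. Chaining the three identities gives
\[
\int_M|\grade h|^2\, d\mu_g=\frac12\int_M\kappa(h^2)\,\mu_g=-\frac12\int_Mh^2\,\mathrm{div}(\kappa)\,\mu_g ,
\]
which is the assertion.

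The product rules and the two divergence-theorem applications are routine; the only delicate point, and the one I expect to be the main obstacle, is regularity. Since $h$ lies in $\mathcal{B}_b^L$ it is a priori only leafwise smooth, so the vector fields $\grade(h^2)$ and $h^2\kappa$ need not be $C^1$ in the transverse directions, and the global divergence theorem on $M$ is not immediately justified. I would handle this either by taking $h$ globally smooth, in which case both integrations are immediate, or by replacing the two global integrations with leafwise integrations by parts combined with the disintegration of $\mu_g$ along the foliation used in the preceding results, approximating $h$ transversally if necessary.
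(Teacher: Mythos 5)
Your proof is correct and follows essentially the same route as the paper: both reduce $|\grade h|^2$ to $\tfrac12\Delta_E(h^2)$ via Equation (\ref{z4.1}), use the identity $(\Delta_E u-\kappa u)\mu_g=\mathrm{div}(\grade u)\mu_g$ from (\ref{rmk4}) together with the Gauss theorem, and then integrate $\mathrm{div}(h^2\kappa)$ to trade $\kappa(h^2)$ for $-h^2\,\mathrm{div}(\kappa)$. Your closing remark on the leafwise-only regularity of $h$ is a legitimate caveat that the paper's own (very terse) proof also leaves unaddressed.
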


\begin{proof}
One uses that  $\Delta_E-\kappa$ is  selfadjoint, Equation (\ref{z4.1}) and
the Gauss theorem to find that
\[
0=\int_M(\Delta_E-\kappa)h^2\mu_g=2\int_M|\grade h|^2\mu_g-\int_M\kappa
(h^2)\mu_g.
\]
\end{proof}







\noindent \textbf{Example 4:}
Let $M$ be a quotient of the
universal covering of $Sl(2,\mr)$ by a cocompact lattice. Denote by
 $\{X,Y,H\}$ an orthonormal basis of $TM$ satisfying
\[
[X,H]=X\hspace{1cm}[X,Y]=-H\hspace{1cm}[H,Y]=Y.
\]
Consider the foliation induced by $E=\mathrm{span} \{X,H\}$. 
We have that 
\[
\Delta_E=X^2+H^2+H.
\]
Hence FoBM satisfies the following stochastic differential equation
\[
dB=Hdt+H\circ d B^1+X\circ d B^2
\]
where $(B^1,B^2)$ is the Brownian motion in $\mr^2$. In this case we also have
that $\kappa=H$ and $\mathrm{div}(\kappa)=0$
which implies that the volume measure $\mu_g$ and any $h \mu_g$ with $h$
constant in the leaves are harmonic (cf. Corollary \ref{div-zero}).
But any smooth $h$ which is leafwise constant is constant in $M$: In fact, note
that for any smooth function $f$,
\begin{equation} \label{estrela}
 \int_M Hf ~\mu_g = \int_M L_H (f\mu_g) - \int_M f \mathrm{div} H \mu_g
\end{equation}
and both terms on the right hand side vanishes by Cartan formula and Stokes
theorem. Moreover, we observe that
\begin{eqnarray*}
H(Yh)&=&Yh+YHh \\
&&=Yh.
\end{eqnarray*}
Hence, applying Equation (\ref{estrela}) to $(Yh)^2$,
\begin{eqnarray*}
0=\int_M H(Yh)^2\mu_g&=&2\int_M(Yh)H(Yh)\mu_g \\
&=&2\int_M(Yh)^2\mu_g.
\end{eqnarray*}
Then $h$ is constant. 


\eop

\noindent \textbf{Example 5:} (Lie foliations) Let $M$  be a manifold and
$\mathfrak{g}$ a Lie algebra of
dimension $q$. Assume that there exists a non singular surjective
$\mathfrak{g}$-valued 1-form
$\theta$ which satisfies the Maurer-Cartan formula
\[
d\theta+\frac{1}{2}[\theta,\theta]=0.
\]
Consider the Lie foliation $E= \ker \theta_x$.

%

Let $Y_1,\ldots, Y_q$ be vector fields in $TM$ such
that $\theta (Y_i)$, $i=1. \ldots, q $ is an orthonormal basis in
$\mathfrak{g}$. 
We introduce an adapted metric on $M$,
i.e., a metric $g$ such that $g(Y_k,Y_j)=\delta_{kj}$ and $
Y_1,\ldots, Y_q$ is an orthonormal basis of $E^\perp$. Hence, for all $X\in E$,
\[
g(\kappa,X)=\sum_{k=1}^qg(\nabla_{Y_k}Y_k,X)=g([Y_k,X],Y_k)=0.
\]
Then $\kappa=0$ therefore the volume measure $\mu_g$ and any $h \mu_g$ with $h$
constant in the leaves are harmonic (cf. Corollary \ref{div-zero}).

\eop

\vspace{1cm}


\begin{thebibliography}{99}

\bibitem{adams} S. R. Adams, \textit{Superharmonic functions on foliations}.
Trans. Amer. Math. Soc. 330 (1992), 2, 625-635.


\bibitem{Alcalde-PJM} F. Alcalde Cuesta and A. Rechtman - \textit{Averaging
sequences}. Pacific J. Math. 255 (2012), no. 1, 1-23. 


\bibitem{Alcalde-DCDS} F. Alcalde Cuesta and A. Rechtman -
\textit{Minimal Folner
foliations are amenable.} Discrete Contin. Dyn. Syst. 31 (2011), no. 3,
685-707. 

\bibitem{Bogashev1} V. I. Bogachev, M. R\"ockner and S. V. Shaposhnikov - {\it 
On positive and probability solutions of the stationary Fokker-Planck-Kolmogorov
equation}. (Russian) Dokl. Akad. Nauk 444 (2012), no. 3, 245-249; translation in
Dokl. Math. 85 (2012), no. 3, 350-354.

\bibitem{Bogashev2}  V. I. Bogachev, M. R\"ockner and S. V. Shaposhnikov - {\it 
On uniqueness of solutions to the Cauchy problem for degenerate
Fokker-Planck-Kolmogorov equations}. J. Evol. Equ. 13 (2013), no. 3, 577-593.

\bibitem{Bonatti} C. Bonatti,  X.  G\'omez-Mont and R. Vila-Freyer - \textit{
Statistical behaviour of the leaves of Riccati foliations}. Ergodic Theory
Dynam.
Systems 30 (2010), no. 1, 67-96. 



\bibitem{candel1} A. Candel, \textit{The harmonic measures of Lucy Garnett}.
Adv. Math.  176  (2003),  2, 187-247.


\bibitem{candel} A. Candel, L. Conlon. \textit{Foliations I}.
Graduate Studies in Mathematics, 23. American Mathematical
Society. \textit{Foliations II}. Graduate Studies in Mathematics,
60. American Mathematical Society.




\bibitem{Elworthy} K. D. Elworthy, \textit{Stochastic Differential Equations on
Manifolds}.
London Math. Society (Lecture Notes Series 70) Cambridge
University Press 1982.


\bibitem{emery} M. Emery, \textit{Stochastic Calculus in Manifolds}.
Universitext. Springer-Verlag,. Berlin, 1989.






\bibitem{Garnett} L. Garnett,  \textit{Foliation, the ergodic theorem and
Brownian motion}.
Journal of Functional Analysis 51, (1983)pp. 285-311


\bibitem{ghys} E. Ghys, \textit{Gauss-Bonnet theorem for 2-dimensional
foliations},
J. Functional Analysis 77 (1988) pp. 51-59.
\bibitem{ghys1} E. Ghys, \textit{Topologie des  feuilles
g\'en\'eriques}, Ann. of Math. 141 (1995) pp. 387-422.





\bibitem{Grigorian2} A. A. Grigorian, \textit{Analytic and geometric background
of recurrence
and non-explçosion of Brownian motion on Riemannian manifolds} Bull. AMS,
\textbf{36}, no. 2 (1999), 135-249.




\bibitem{IW} N. Ikeda and S. Watanabe \textit{Stochastic differential equations
and diffusion processes}.
Second edition. North-Holland Mathematical Library, 24.
North-Holland Publishing Co., Amsterdam; Kodansha, Ltd., Tokyo, 1989.



\bibitem{kaimainovich} V. A. Kaimanovich,\textit{Brownian motion on foliations:
Entropy, invariant measures, mixing}
Functional Analysis and Its Applications, Vol. 22, N$^0$ 4, (1988) pp. 326-328

\bibitem{kobayashi nomizu} S. Kobayashi and K. Nomizu,
\textit{Foundations of Differential Geometry}, vol I, Interscience  Publishers,
New York 1963.





\bibitem{ledrappier} F. Ledrappier,
\textit{Ergodic properties of the stable foliations}, Lecture Notes in Math.,
Vol. 1514, Springer, Berlin, (1993) pp.
131-145.


\bibitem{Lorenzi-Bertoldi} L. Lorenzi and M. Bertoldi, \textit{Analytical
methods for Markov semigroups}. 
Chapman and Hall/CRC, Boca Raton, 2007.


\bibitem{plante}J. F. Plante, \textit{Foliations with measure preserving
holonomy}.
Annals of  Mathematics, 102 (1975), 327-361.




\bibitem{Rumler} H. Rumler, \textit{Quelques notions simples en g\'eom\'etrie
riemannienne et leurs
applications aux feuilletages compacts}. Comment. Math. Helvetici,
Vol 54 (1979), pp. 224-239.


\bibitem{Rumler1} H. Rumler, \textit{Differential forms, Weitzembock formulae
and foliations}.
Publicacions Matematiques, Vol 33 (1989), pp. 543-554.


\bibitem{Tondeur} P. Tondeur \textit{Foliations on Riemannian manifolds}.
Universitext, Springer Verlag, Berlin-Heidelberg-New York, 1988.


\bibitem{yue} Ch. Yue.
\textit{Brownian motion on Anosov foliations and manifolds of negative
curvature}
 J. Differential Geom. Vol. 41, N$^0$ 1 (1995), pp.159-183.

\end{thebibliography}
\end{document}